\newtheorem{theo}{Theorem} 
\newtheorem{exttheo}{Theorem} 
\newtheorem{lemma}{Lemma}[section]
\newtheorem{prop}[lemma]{Proposition}
\newtheorem{claim}[lemma]{Claim}
\theoremstyle{remark}
\newtheorem{remark}[lemma]{Remark}
\theoremstyle{definition}
\newcommand{\RR}{\mathbb{R}}
\newcommand{\eps}{\varepsilon}
\newcommand{\BB}{B}
\newcommand{\EEE}{\mathcal{E}}
\newcommand{\LLL}{\mathcal{L}}
\newcommand{\III}{\mathcal{I}}
\newcommand{\tIII}{\widetilde{\mathcal{I}}}
\newcommand{\YYY}{\mathcal{Y}}
\newcommand{\tYYY}{\widetilde{\mathcal{Y}}}
\newcommand{\tS}{\tilde{S}}
\newcommand{\tQ}{\widetilde{Q}}
\newcommand{\tW}{\widetilde{W}}
\newcommand{\tG}{\tilde{G}}
\newcommand{\tomega}{\widetilde{\omega}}
\DeclareMathOperator{\re}{Re}
\DeclareMathOperator{\im}{Im}
\newcommand{\hdot}{\dot{H}^1}
\newcommand{\Hdot}{\dot{H}^1(\RR^N)}
\newcommand{\EMPH}[1]{\medskip\noindent\emph{#1}.}
\numberwithin{equation}{section} 
\title[Scattering norm estimate for critical wave equation]{Scattering norm estimate near the threshold for energy-critical focusing semilinear wave equation}
\author[T.~Duyckaerts]{Thomas Duyckaerts$^1$}
\email{thomas.duyckaerts@u-cergy.fr}
\address{Thomas Duyckaerts\\
Universit{\'e} de Cergy-Pontoise\\
D\'epartement de Math\'ematiques\\ 
Site de Saint Martin, 2 avenue Adolphe-Chauvin\\ 
95302 Cergy-Pontoise cedex, France. }
\author[F.~Merle]{Frank Merle$^2$}
\thanks{$^1$Cergy-Pontoise (UMR 8088)}
\thanks{$^2$Cergy-Pontoise, IHES, CNRS}
\thanks{This work was partially supported by the French ANR Grant ONDNONLIN}
\date{\today}
\begin{document}

\begin{abstract}
We consider the energy-critical semilinear focusing wave equation in dimension $N=3,4,5$. An explicit solution $W$ of this equation is known. By the work of C.~Kenig and F.~Merle, any solution of initial condition $(u_0,u_1)$ such that $E(u_0,u_1)<E(W,0)$ and $\|\nabla u_0\|_{L^2}<\|\nabla W\|_{L^2}$ is defined globally and has finite $L^{\frac{2(N+1)}{N-2}}_{t,x}$-norm, which implies that it scatters. In this note, we show that the supremum of the $L^{\frac{2(N+1)}{N-2}}_{t,x}$-norm taken on all scattering solutions at a certain level of energy below $E(W,0)$ blows-up logarithmically as this level approaches the critical value $E(W,0)$. We also give a similar result in the case of the radial energy-critical focusing semilinear Schr\"odinger equation. The proofs rely on the compactness argument of C.~Kenig and F.~Merle, on a classification result, due to the authors, at the energy level $E(W,0)$, and on the analysis of the linearized equation around $W$.
\end{abstract}

\maketitle


\section{Introduction}
We consider the focusing energy-critical wave equation on an interval $I$ ($0\in I$)
\begin{equation}
\label{CP}
\left\{ 
\begin{gathered}
\partial_t^2 u -\Delta u-|u|^{\frac{4}{N-2}}u=0,\quad (t,x)\in I\times \RR^N\\
u_{\restriction t=0}=u_0\in \hdot,\quad \partial_t u_{\restriction t=0}=u_1\in L^2,
\end{gathered}\right.
\end{equation}
where $u$ is real-valued, $N\in\{3,4,5\}$, $L^2:=L^2(\RR^N)$ and $\hdot:=\Hdot$. The equation \eqref{CP} is locally well-posed in $\hdot\times L^2$ (see \cite{Pecher84}, \cite{GiSoVe92} and \cite{ShSt94}): if $(u_0,u_1)\in \hdot\times L^2$, there exists an unique solution $u$, defined on a maximal time of existence $I_{\max}$ and such that for all interval $J$
$$J\Subset I_{\max}\Longrightarrow \|u\|_{S(J)}<\infty,\text{ where }S(J):=L^{\frac{2(N+1)}{N-2}}\left(J\times \RR^N\right).$$
Furthermore, the solution $u$ of \eqref{CP} scatters forward in time in $\hdot\times L^2$ if and only $$ [0,+\infty) \subset I_{\max}\text{ and }\|u\|_{S(0,+\infty)}<\infty.$$
Thus the norm $S(\RR)$ measures the nonlinear effect for a given solution. The energy
$$ E(u(t),\partial_tu(t))=\frac{1}{2} \int |\partial_t u(t,x)|^2dx+\frac{1}{2} \int |\nabla u(t,x)|^2dx-\frac{N-2}{2N}|u(t,x)|^{\frac{2N}{N-2}}dx$$
is conserved for solutions of \eqref{CP}. 

The \emph{defocusing} case (equation \eqref{CP} with sign $+$ instead of $-$ in front of the nonlinearity) has been the object of intensive studies in the last decades (see for example \cite{ShSt98} and references therein). In this case the solutions are known to scatter, which implies, for any solution $u$, a bound of the norm $S(\RR)$ by an unspecified function of the defocusing energy 
$$E_{d}=\frac{1}{2} \int |\partial_t u|^2+\frac{1}{2} \int |\nabla u|^2+\frac{N-2}{2N}\int |u|^{\frac{2N}{N-2}}.$$
 In three spatial dimension, an explicit upper bound was proven by T.~Tao \cite{Tao06DPDE}: for any solution $u$ of the defocusing equation,
$$ \|u\|_{L^{4}_tL^{12}_x}\leq C(1+E_d)^{CE_d^{105/2}},$$
which gives, by Strichartz and interpolation estimate, a similar bound for $\|u\|_{S(\RR)}$.

Going back to the \emph{focusing} case,  consider the explicit $\hdot$ stationnary solution of \eqref{CP}
\begin{equation}
\label{defW}
W:=\frac{1}{\left(1+\frac{|x|^2}{N(N-2)}\right)^{\frac{N-2}{2}}}.
\end{equation}
In \cite{KeMe06Pb}, C.~Kenig and F.~Merle have described the dynamics of \eqref{CP} below the energy threshold $E(W,0)$. Namely, if $E(u_0,u_1)<E(W,0)$, then $\int |\nabla u_0|^2\neq \int |\nabla W|^2$ and the solution $u$ scatters (both forward and backward in time) if and only if $\int |\nabla u_0|^2<\int |\nabla W|^2$. 
This implies that for $\eps>0$ the following supremum is finite:
$$ \III_{\eps}=\sup_{u\in F_{\eps}} \int_{\RR\times \RR^N}|u(t,x)|^{\frac{2(N+1)}{N-2}}\,dtdx=\sup_{u\in F_{\eps}} \|u\|_{S(\RR)}^{\frac{2(N+1)}{N-2}},$$
where 
$$F_{\eps}:=\left\{u \text{ solution of }\eqref{CP}\text{ such that }E(u_0,u_1)\leq E(W,0)-\eps^2 \text{ and }\int |\nabla u_0|^2<\int |\nabla W|^2\right\}.$$  
Furthermore, the existence of the non-scattering solution $W$ at the energy threshold shows that
\begin{equation*}
\lim_{\eps\rightarrow 0^+} \III_{\eps}=+\infty.
\end{equation*}

The purpose of this note is to give an equivalent of $\III_{\eps}$ for small $\eps$. Consider the negative eigenvalue $-\omega^2$ ($\omega>0$) of the linearized operator associated to \eqref{CP} around $W$:
$$-\omega^2=\inf_{\substack{u\in H^1\\ \int u^2=1}} \int_{\RR^N}|\nabla u|^2-\frac{N+2}{N-2}\int_{\RR^N} W^{\frac{4}{N-2}}|u|^2.$$
(See \S \ref{SS:lin} for details).  Then
\begin{theo}
\label{maintheo}
$$\lim_{\eps\rightarrow 0^+} \frac{\III_{\eps}} {|\log \eps|}=\frac{2}{\omega}\int_{\RR^N} W^{\frac{2(N+1)}{N-2}}.$$
\end{theo}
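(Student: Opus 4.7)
The plan is to establish matching lower and upper bounds for $\III_\eps/|\log\eps|$ with common limit $c:=\frac{2}{\omega}\int_{\RR^N}W^{\frac{2(N+1)}{N-2}}$. The heuristic driving the theorem is that a near-threshold solution must stay close to $W$ for a total time of length $\frac{2}{\omega}|\log\eps|+O(1)$, controlled by the exponential instability rate $\omega$ of the linearized operator, during which the spatial integrand of $\III_\eps$ is essentially $W^{\frac{2(N+1)}{N-2}}$; the remainder of the trajectory contributes $O(1)$.

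\emph{Lower bound.} Let $\YYY>0$ be an $L^2$-normalized eigenfunction of $L:=-\Delta-\frac{N+2}{N-2}W^{\frac{4}{N-2}}$ for the bottom eigenvalue $-\omega^2$. I would first construct an approximate solution of \eqref{CP} centered on
$$W(x)+\frac{\eps}{\omega}\cosh(\omega t)\,\YYY(x),$$
whose linearized energy is exactly $-\eps^2$, and then upgrade it to an exact solution $u_\eps$ on $[-T_\eps,T_\eps]$, $T_\eps:=\frac{1}{\omega}\log\frac{\kappa}{\eps}$ for a small fixed $\kappa>0$, via a contraction-mapping argument in an appropriate Strichartz norm. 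A small adjustment of the initial data together with a suitable sign of $\YYY$ enforces $E(u_\eps)=E(W,0)-\eps^2$ and $\|\nabla u_{\eps,0}\|_{L^2}<\|\nabla W\|_{L^2}$, so $u_\eps\in F_\eps$. Since $u_\eps=W+o_{\hdot}(1)$ uniformly on $[-T_\eps,T_\eps]$, dominated convergence yields
$$\|u_\eps\|_{S(-T_\eps,T_\eps)}^{\frac{2(N+1)}{N-2}}=2T_\eps\int W^{\frac{2(N+1)}{N-2}}+o(|\log\eps|)=c|\log\eps|+o(|\log\eps|).$$

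\emph{Upper bound via compactness and classification.} Suppose by contradiction that for some $\delta>0$ there exist $\eps_n\to 0^+$ and $u_n\in F_{\eps_n}$ with $\|u_n\|_{S(\RR)}^{\frac{2(N+1)}{N-2}}\geq(c+\delta)|\log\eps_n|$. I would apply the Bahouri-G\'erard profile decomposition to $(u_{n,0},u_{n,1})$ and then run the Kenig-Merle compactness/rigidity scheme in the subthreshold regime: energy conservation, subadditivity of energy on subthreshold profiles, and the absence of any scattering solution at energy $E(W,0)$ with kinetic energy $\leq\|\nabla W\|_{L^2}^2$ force all but one profile to be trivial, and the surviving profile $U$ satisfies $E(U)=E(W,0)$, $\|\nabla U(0)\|_{L^2}\leq\|\nabla W\|_{L^2}$. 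The threshold classification of the authors then constrains $U$ to be $W$ or $W^-$ modulo the symmetries of \eqref{CP}. After the corresponding time/scale renormalization, $u_n$ is $o_{\hdot\times L^2}(1)$-close to the trajectory of $U$ on an interval $I_n$, and uniformly away from it outside; since $U$ scatters on $\RR\setminus I_n$ (in at least one time direction), a long-time perturbation argument gives $\|u_n\|_{S(\RR\setminus I_n)}=O(1)$ uniformly in $n$.

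\emph{Linearization on $I_n$ and conclusion.} On the subinterval of $I_n$ where $u_n$ is close to $W$ (all of $I_n$ if $U=W$, the forward half if $U=W^-$, since $W^-(t)\to W$ as $t\to+\infty$), write $u_n=W+h_n$ and split $h_n=\alpha_n\YYY+g_n$ with $g_n$ orthogonal to $\YYY$ and to the zero modes of $L$ coming from scaling and translations. Coercivity of $\langle L\cdot,\cdot\rangle+\|\partial_t\cdot\|_{L^2}^2$ on this complement combined with $E(u_n)\leq E(W,0)-\eps_n^2$ reduces at leading order to
$$\dot\alpha_n^2-\omega^2\alpha_n^2\leq-\eps_n^2+o(\eps_n^2),$$
so $\alpha_n(t)\approx Ae^{\omega t}+Be^{-\omega t}$ with $-4\omega^2 AB\leq-\eps_n^2$, and the ODE forces $|\alpha_n|$ to exit any fixed small neighborhood of $0$ within time at most $\frac{2}{\omega}|\log\eps_n|+O(1)$. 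Combined with $\big\||u_n(t)|^{\frac{2(N+1)}{N-2}}-W^{\frac{2(N+1)}{N-2}}\big\|_{L^1_x}=o(1)$ on this subinterval, this yields $\|u_n\|_{S(I_n)}^{\frac{2(N+1)}{N-2}}\leq c|\log\eps_n|+o(|\log\eps_n|)$, contradicting the assumption. The main obstacle will be making the compactness reduction quantitative: handling the case $U=W^-$ (where $u_n$ approaches $W$ only on a half-line and one must graft the scattering tail of $W^-$ to the $W$-neighborhood via a long-time perturbation), securing the uniform $O(1)$ bound on $\|u_n\|_{S(\RR\setminus I_n)}$, and carefully tracking the symmetry parameters (scaling, translations, time shift) in the spectral projection in order to remain in the coercivity regime of $L$.
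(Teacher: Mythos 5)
Your overall strategy coincides with the paper's: explicit data of the form $W-a\YYY$ for the lower bound (your $\cosh(\omega t)$ ansatz is exactly the linearized flow of such data), and compactness plus the threshold classification plus a linearized exit-time analysis for the upper bound. The lower bound is essentially sound; the paper avoids your contraction-mapping construction by working directly with the exact solution of data $(W-a_n\YYY,0)$, controlling it up to the exit time $T_n(\eta)$ through a modulation/bootstrap lemma, and doubling by the time symmetry $u(t)=u(-t)$. Your two-sided ODE bookkeeping ($-4\omega^2AB\le-\eps_n^2$, hence total near-$W$ time at most $\frac2\omega|\log\eps_n|+O(1)$) is a legitimate repackaging of the paper's one-sided estimate $T_n(\eta)\le\frac{1}{\omega}\big|\log|\beta_n(0)|\big|(1+o(1))$ combined with $\eps_n\le M_0|\beta_n(0)|$ and the normalization $\|u_n\|_{S(-\infty,0)}=\|u_n\|_{S(0,+\infty)}$. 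A minor caveat: the cubic error in the energy expansion is $o(\alpha_n^2)$, not $o(\eps_n^2)$ (since $|\alpha_n|$ may be much larger than $\eps_n$), so it must be absorbed into the $\omega^2\alpha_n^2$ term rather than into $\eps_n^2$, as the paper does by slightly shifting $\omega$ to $\omega^{\pm}$.

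The genuine gap is the claim $\|u_n\|_{S(\RR\setminus I_n)}=O(1)$. Your justification --- that $U$ scatters on $\RR\setminus I_n$ in at least one time direction, so long-time perturbation applies --- fails: when $U=W$ (which is the case produced by the normalization $\|u_n\|_{S(-\infty,0)}=\|u_n\|_{S(0,+\infty)}\to\infty$), $W$ has infinite $S$-norm on every half-line, so Lemma \ref{L:LTPT} controls $u_n$ only on bounded time intervals and says nothing after $u_n$ exits the neighborhood of $W$; when $U=W^-$ one controls only one time direction. The missing idea is the paper's Step 3: if $\|u_n\|_{S(T_n(\eta),+\infty)}\to\infty$, then both $\|u_n\|_{S(-\infty,T_n)}$ and $\|u_n\|_{S(T_n,+\infty)}$ blow up, so Proposition \ref{P:toW}\eqref{C:toW} applies \emph{at the exit time} $T_n(\eta)$ and forces $\|\partial_tu_n(T_n)\|_2\to0$, hence $\beta_n'(T_n)=\int\partial_th_n(T_n)\,\YYY\to0$; this contradicts the exit-velocity bound $\liminf_n|\beta_n'(T_n(\eta))|\ge\omega\eta$ of Proposition \ref{P:exittime}. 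Without this incompatibility between a nonzero exit velocity along $\YYY$ and re-collapse onto $W$, the $O(1)$ bound on the tail --- and hence the upper bound in the theorem --- is not established. You correctly flag this as the main obstacle, but flagging it is not resolving it.
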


\begin{remark}
It would be interesting to get an explicit value of the limit 
$\frac{2}{\omega}\int_{\RR^N} W^{\frac{2(N+1)}{N-2}}.$
A straightforward computation gives:
\begin{align*}
\int_{\RR^N} W^{\frac{2(N+1)}{N-2}}&=\frac{(N(N-2))^{\frac{N}{2}}}{2^{2N+1}}\times \frac{N!}{\left((\frac N2)!\right)^2}\times \pi&&\text{ if }N\text{ is even,}\\
\int_{\RR^N} W^{\frac{2(N+1)}{N-2}}&=\frac{(N(N-2))^{\frac{N}{2}}}{2}\times \frac{\left(\frac{N-1}{2}\right)!}{N!}&&\text{ if }N\text{ is odd}.
\end{align*}
However we do not know any explicit expression of $\omega$.
\end{remark}

Let us give an outline of the proof of Theorem \ref{maintheo}. In Section \ref{S:compactness}, we show that a sequence of solutions $(u_n)$ such that 
$$E(u_n(0),\partial_t u_n(0))<E(W,0), \quad \int|\nabla u_n(0)|^2<\int |\nabla W|^2 \text{ and } \lim_{n\rightarrow +\infty}\|u_n\|_{S(\RR)}=+\infty$$
must converge to $W$ up to modulation for a well-chosen time sequence.
This relies on the compactness argument of \cite[Section 4]{KeMe06Pb}, using the profile decomposition of Bahouri-G\'erard \cite{BaGe99}, and on the classification of the solutions of \eqref{CP} at the threshold of energy in our previous work \cite{DuMe07b}. The second step of the proof is an analysis of the behaviour of solutions whose initial conditions are close to $(W,0)$, which is carried out in Section \ref{S:estimates}. We show, as a consequence of the existence of the negative eigenvalue $-\omega^2$, that such solutions go away from the solution $W$ in a time which is of logarithmic order with respect to the distance of the initial condition to $(W,0)$. In Section \ref{S:proof} we put together the preceding arguments to prove Theorem \ref{maintheo}.

Our arguments do not depend strongly on the nature of equation \eqref{CP}, and we except that a logarithmic estimate of the scattering norm $S(\RR)$ near the threshold holds in similar situations, as long as the linearized operator around the ground state admits real nonzero eigenvalues. In Section \ref{S:NLS} we give a result and a sketch of proof in the case of the radial, energy-critical focusing nonlinear Schr\"odinger equation.

\section{Convergence to $W$ and $W^-$ near the threshold}
\label{S:compactness}

In all the article, we will denote by $\|\cdot\|_p$ the $L^p$ norm on $\RR^N$.

Equation \eqref{CP} enjoys the following invariances: if $u$ is a solution and $t_0\in\RR$, $x_0\in \RR^N$, $\lambda_0>0$, $\delta_0,\delta_1\in \{-1,+1\}$, then
$$ v(t,x)=\frac{\delta_0}{\lambda_0^{(N-2)/2}}u\Big(\frac{t_0+\delta_1 t}{\lambda_0},\frac{x+x_0}{\lambda_0}\Big)$$
is also a solution. Note that the energy of $u$ and, if $u$ is globally defined, the norm $\|u\|_{S(\RR)}$ are not changed by these transformations.

We recall the following classification Theorem, proven in \cite{KeMe06Pb}, for the case $E(u_0,u_1)<E(W,0)$, and in \cite{DuMe07b} for the existence of $W^-$ and the case $E(u_0,u_1)=E(W,0)$:
\begin{exttheo}[Kenig-Merle,Duyckaerts-Merle]
\label{T:classification}
There exists a global solution $W^-$ of \eqref{CP} such that 
\begin{gather*}
E(W^-(0),\partial_tW^-(0))=E(W,0), \quad \|\nabla W_-(0)\|_2<\|\nabla W\|_2\\
\|W^-\|_{S(-\infty,0)}<\infty,\quad \lim_{t\rightarrow +\infty} \|\nabla(W^-(t)-W)\|_2+\|\partial_t(W^-(t)-W)\|_2=0.
\end{gather*}
Moreover, if $u$ is a solution of \eqref{CP} such that $E(u_0,u_1)\leq E(W,0)$ and $\|\nabla u_0\|_2\leq \|\nabla W\|_2$, then $u$ is globally defined. If furthermore $\|u\|_{S(\RR)}=\infty$, then $u=W^-$ or $u=W$ up to the invariances of the equation.
\end{exttheo}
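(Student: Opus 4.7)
The plan is to decompose the theorem into its two constituent regimes: the strictly sub-threshold case ($E(u_0,u_1)<E(W,0)$), handled by the Kenig--Merle concentration-compactness/rigidity method, and the threshold case ($E(u_0,u_1)=E(W,0)$), which requires a spectral/modulation analysis around $W$.

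For the sub-threshold case, the starting point is the variational characterization of $W$: up to the symmetries of the equation, $W$ is the unique $\hdot$ minimizer of $\|\nabla v\|_2$ among nonzero solutions of $-\Delta v=|v|^{4/(N-2)}v$. This gives sharp Sobolev-type comparisons between $E$ and $\|\nabla u(t)\|_2$ and shows that the region $\{\|\nabla u_0\|_2<\|\nabla W\|_2,\ E(u_0,u_1)<E(W,0)\}$ is flow-invariant with $\|\nabla u(t)\|_2^2+\|\partial_t u(t)\|_2^2\lesssim E(u_0,u_1)$, so solutions are global. Scattering is then obtained by contradiction: if it failed, the infimum of energies at which scattering fails would, via the Bahouri--G\'erard profile decomposition, supply a non-scattering \emph{critical element} $u_c$ whose trajectory is pre-compact in $\hdot\times L^2$ modulo scaling and translation. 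A truncated virial/Morawetz-type identity, integrated in time and exploiting the compactness of the trajectory to control boundary terms, would then force $u_c\equiv 0$, a contradiction.

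For the threshold case, I would construct $W^-$ by writing $u=W+h$, so that $\partial_t^2 h+\LLL h=\mathcal{N}(h)$ with $\LLL:=-\Delta-\frac{N+2}{N-2}W^{4/(N-2)}$ and $\mathcal{N}(h)$ at least quadratic in $h$. By standard spectral theory $\LLL$ has a single negative eigenvalue $-\omega^2$ with positive eigenfunction $\YYY$, so the linearized problem admits exponential modes $e^{\pm\omega t}\YYY$. Picking the decaying mode and running a Picard iteration in an exponentially weighted Strichartz norm produces two solutions $W^\pm$ of \eqref{CP} behaving as $W\pm a e^{-\omega t}\YYY+O(e^{-2\omega t})$ when $t\to+\infty$; in particular $(W^\pm(t),\partial_t W^\pm(t))\to(W,0)$ in $\hdot\times L^2$. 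A short variational computation shows that the appropriate sign yields $\|\nabla W^-(t)\|_2<\|\nabla W\|_2$ for $t$ large, and extending $W^-$ backward by local well-posedness plus the sub-threshold result applied on $(-\infty,0]$ then gives global existence and $\|W^-\|_{S(-\infty,0)}<\infty$.

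For the classification at threshold, given any solution $u$ with $E(u_0,u_1)=E(W,0)$, $\|\nabla u_0\|_2\leq\|\nabla W\|_2$ and $\|u\|_{S(\RR)}=\infty$, I would first show via Bahouri--G\'erard and the variational inequality that, modulo the invariances, $(u(t_n),\partial_tu(t_n))\to(W,0)$ in $\hdot\times L^2$ along some time sequence. After modulation, I would write $u-W=\alpha(t)\YYY+r(t)$ with $r(t)$ in the stable subspace of $\LLL$ (orthogonal to $\YYY$ and to the null directions generated by the symmetries of $W$) and derive the ODE $\alpha''-\omega^2\alpha=O(|\alpha|^2+\|r\|_{\hdot}^2)$. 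Conservation of energy and a coercivity estimate on the stable component force $\|r\|_{\hdot}\lesssim |\alpha|^2$ plus lower order, and $\alpha$ itself is then constrained to be either identically zero --- forcing $u=W$ up to invariances --- or asymptotically a pure exponential $\pm a e^{-\omega t}$ --- forcing, by uniqueness in the Picard construction of $W^\pm$, $u=W^-$ up to invariances. The main obstacle is this last rigidity step: ruling out every mixed behavior of $\alpha(t)$ and showing that the two admissible sign choices match the constraint $\|\nabla u_0\|_2\leq\|\nabla W\|_2$, which selects $W^-$ rather than its unstable companion $W^+$.
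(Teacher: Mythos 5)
This statement is the paper's Theorem A: it is quoted with attribution to Kenig--Merle and Duyckaerts--Merle and is \emph{not} proved in the present article, so the only meaningful comparison is with the cited references \cite{KeMe06Pb} and \cite{DuMe07b}. Your two-regime outline (concentration-compactness plus rigidity below the threshold; construction of $W^\pm$ and a modulation/spectral analysis at the threshold) is indeed the strategy of those papers, so as a roadmap it is faithful. But as a proof it has two genuine gaps beyond mere compression.

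First, you derive $\|W^-\|_{S(-\infty,0)}<\infty$ by ``extending $W^-$ backward by local well-posedness plus the sub-threshold result applied on $(-\infty,0]$''. This step fails: $E(W^-(0),\partial_t W^-(0))=E(W,0)$ exactly, so the sub-threshold scattering theorem never applies to $W^-$, no matter the time direction. The condition $\|\nabla W^-(t)\|_2<\|\nabla W\|_2$ together with threshold energy gives global existence via the variational trapping, but the finiteness of the scattering norm for negative times is itself a nontrivial output of the threshold analysis in \cite{DuMe07b} (essentially one must exclude that $W^-$ also converges to $W$ as $t\to-\infty$, which requires a separate rigidity argument). Second, the core of the classification at threshold --- proving that a non-scattering threshold solution with $\|\nabla u_0\|_2\leq\|\nabla W\|_2$ converges to $W$ up to modulation, upgrading this to exponential convergence, and then identifying the solution with a member of the one-parameter family $W^\pm$ by a uniqueness statement for solutions approaching $W$ exponentially --- is precisely the step you yourself label ``the main obstacle'' and leave unresolved; ruling out the mixed behaviours of $\alpha(t)$ (via the quantities $\alpha'\pm\omega\alpha$, coercivity of the linearized energy on the orthogonal complement, and control of the modulation parameters) is where essentially all of the work of \cite{DuMe07b} lies. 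With these two points open, the proposal documents the architecture of the known proofs but does not constitute one; note also that the sub-threshold rigidity step for the wave equation is more involved than a single virial/Morawetz identity (the finite-time blow-up scenario requires a separate self-similar analysis in \cite{KeMe06Pb}).
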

We will also need the following simple version of long-time perturbation theory results (see e.g. \cite[Theorem 2.20]{KeMe06Pb}).
\begin{lemma}
\label{L:LTPT}
Let $M>0$. Then there exist positive constants $\eps(M)$ and $C(M)$ such that for all solutions $v$ and $u$ of \eqref{CP}, with initial conditions $(v_0,v_1)$ and $(u_0,u_1)$, if the forward time of existence of $v$ is infinite and
\begin{equation*}
\|v\|_{S(0,+\infty)}\leq M\text{ and }\|\nabla(u_0-v_0)\|_{2}+\|u_1-v_1\|_{2}\leq \eps(M),
\end{equation*}  
then $u$ is globally defined for positive times and $\|u\|_{S(0,+\infty)}\leq C(M)$. A similar statement holds for negative times.
\end{lemma}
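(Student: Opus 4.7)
The plan is to set $w := u - v$, which solves
\begin{equation*}
\partial_t^2 w - \Delta w = f(v+w) - f(v), \qquad (w, \partial_t w)_{\restriction t=0} = (u_0 - v_0, u_1 - v_1),
\end{equation*}
where $f(s) = |s|^{4/(N-2)} s$. The goal is to show that $\|w\|_{S(0,+\infty)}$ is small (of order $\eps(M)$): once this is obtained, the triangle inequality gives $\|u\|_{S(0,+\infty)} \leq M + \|w\|_{S(0,+\infty)} \leq C(M)$, and the usual continuation criterion for \eqref{CP} extends $u$ to a global forward solution.

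First I would fix a small $\eta = \eta(N) > 0$ and partition $[0,+\infty)$ into consecutive intervals $I_j = [t_j,t_{j+1}]$, $j = 0,\dots, J-1$, on which $\|v\|_{S(I_j)} \leq \eta$; the number $J$ depends only on $M$ and $\eta$. On each $I_j$, applying a Strichartz estimate to the wave equation for $w$ with data at $t_j$, combined with the pointwise bound
\begin{equation*}
|f(v+w) - f(v)| \lesssim |v|^{4/(N-2)} |w| + |w|^{(N+2)/(N-2)}
\end{equation*}
and H\"older in $(t,x)$, produces an inequality of the form
\begin{equation*}
\|w\|_{S(I_j)} \leq C \|(\nabla w(t_j), \partial_t w(t_j))\|_{L^2 \times L^2} + C\eta^{4/(N-2)} \|w\|_{S(I_j)} + C \|w\|_{S(I_j)}^{(N+2)/(N-2)}.
\end{equation*}
Choosing $\eta$ small enough so that $C\eta^{4/(N-2)} \leq 1/2$ absorbs the linear-in-$w$ term on the right, and a continuity argument on $I_j$ then yields
\begin{equation*}
\|w\|_{S(I_j)} + \|(\nabla w(t_{j+1}), \partial_t w(t_{j+1}))\|_{L^2 \times L^2} \leq K \|(\nabla w(t_j), \partial_t w(t_j))\|_{L^2 \times L^2},
\end{equation*}
provided the energy-norm datum at $t_j$ stays below a fixed threshold $\delta_0 = \delta_0(N)$.

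Finally I would iterate this bound across the $J$ subintervals: the energy-norm datum at step $j$ is controlled by $K^j$ times the initial one, so setting $\eps(M) := \delta_0 K^{-J}/2$ keeps the smallness hypothesis alive throughout, and summing $\|w\|_{S(I_j)}^{2(N+1)/(N-2)}$ over $j$ produces the global bound $\|w\|_{S(0,+\infty)} \leq C(M)\eps(M)$. The backward-in-time version follows by applying the forward result to the time-reversed solutions $u(-t,x)$ and $v(-t,x)$, which still solve \eqref{CP}. The hard part will be controlling the accumulation of error across the $J$ bootstrap steps: since the per-step amplification $K$ is uniform but $J$ grows with $M$, one is forced to take $\eps(M)$ roughly exponentially small in $M$. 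This is acceptable, as the statement only asserts the existence of some positive $\eps(M)$.
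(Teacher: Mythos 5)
The paper does not actually prove this lemma --- it is quoted as a known result, citing Theorem 2.20 of Kenig--Merle --- so the comparison is with the standard proof of long-time perturbation theory. Your overall architecture (write $w=u-v$, chop $[0,+\infty)$ into $J=J(M,\eta)$ intervals on which $v$ is small, close a small-data estimate on each interval, iterate with exponential loss $K^{J}$, and take $\eps(M)$ correspondingly small) is exactly the right one, and your closing remarks about the exponential accumulation are accurate.

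There is, however, one genuine gap in the central analytic step. For the \emph{wave} equation the inequality you claim on $I_j$ cannot be obtained by ``Strichartz $+$ pointwise bound $+$ H\"older in $(t,x)$'' in the pure $S$-norms. H\"older applied to $|v|^{4/(N-2)}|w|$ with $v,w\in S(I_j)=L^{\frac{2(N+1)}{N-2}}_{t,x}$ places the forcing in $L^{\frac{2(N+1)}{N+2}}_{t,x}$, and although this exponent satisfies the scaling relation dual to $L^{\frac{2(N+1)}{N-2}}_{t,x}$, the corresponding diagonal pair is \emph{not} wave-admissible (one checks $\frac1{\tilde q}+\frac{N-1}{2\tilde r}=\frac N4>\frac{N-1}4$); it is Schr\"odinger-admissible, which is precisely why this shortcut works for NLS but fails here. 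The correct route --- the one in Kenig--Merle --- uses the inhomogeneous estimate with $D_x^{1/2}$ of the forcing measured in $L^{\frac{2(N+1)}{N+3}}_{t,x}(I_j)$, so one must carry the auxiliary norm $\|D_x^{1/2}w\|_{L^{\frac{2(N+1)}{N-1}}_{t,x}(I_j)}$ through the bootstrap alongside $\|w\|_{S(I_j)}$ and the energy norm, estimate $D_x^{1/2}\bigl(f(v+w)-f(v)\bigr)$ by the fractional chain and Leibniz rules (available for $N\in\{3,4,5\}$ since $f$ is at least $C^2$), and impose the per-interval smallness on $\|D_x^{1/2}v\|_{L^{\frac{2(N+1)}{N-1}}_{t,x}(I_j)}$ rather than on $\|v\|_{S(I_j)}$. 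This in turn requires a preliminary step you omit: showing that $\|v\|_{S(0,+\infty)}\le M$ implies $\|D_x^{1/2}v\|_{L^{\frac{2(N+1)}{N-1}}_{t,x}(0,+\infty)}\le C(M)$, which is done by running the same interval decomposition on $v$'s own equation. With these modifications your iteration closes as described.
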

In this section we show the following:
\begin{prop}
\label{P:toW}
Let $u_n$ be a family of solutions of \eqref{CP}, such that
\begin{equation}
\label{below_threshold}
E\big(u_n(0),\partial_tu_n(0)\big)<E(W,0),\quad \|\nabla u_n(0)\|_2<\|\nabla W\|_2.
\end{equation} 
and $\lim_{n\rightarrow +\infty} \|u_n\|_{S(\RR)}=+\infty$. Let
$(t_{n})_n$ be a time sequence.
\begin{enumerate}
\item \label{C:toW} Assume
$$\lim_{n\rightarrow +\infty} \|u_n\|_{S(-\infty,t_n)}=\lim_{n\rightarrow +\infty} \|u_{n}\|_{S(t_n,+\infty)}=+\infty.$$
Then, up to the extraction of a subsequence there exist $\delta_0\in \{-1,+1\}$ and sequences of parameters $x_n\in \RR^n$, $\lambda_n>0$ such that
\begin{equation*}
\lim_{n\rightarrow +\infty} 
\left\|\frac{\delta_0}{\lambda_n^{N/2}}\nabla u_{n}\left(t_{n},\frac{\cdot-x_n}{\lambda_n}\right)-\nabla  W\right\|_{2}+\left\|\frac{\partial u_{n}}{\partial t}\left(t_{n}\right)\right\|_{2}=0.
\end{equation*}
\item \label{C:toW'} Assume that there exists $C_0\in (0,+\infty)$ such that
$$\lim_{n\rightarrow +\infty} \|u_n\|_{S(-\infty,t_n)}=+\infty \text{ and }\lim_{n\rightarrow +\infty} \|u_{n}\|_{S(t_n,+\infty)}=C_0.$$
Then, up to the extraction of a subsequence there exist $t_0\in \RR$, $\delta_0,\delta_1\in\{-1,+1\}$, and sequences of parameters $x_n\in \RR^n$, $\lambda_n>0$ such that
\begin{multline*}
\qquad \qquad \lim_{n\rightarrow +\infty} \left\|\frac{\delta_0}{\lambda_n^{N/2}}\nabla u_{n}\left(t_{n},\frac{\cdot-x_n}{\lambda_n}\right)-\nabla  W^-(t_0)\right\|_{2}
\\+\left\|\frac{\delta_0}{\lambda_n^{N/2}}\frac{\partial u_{n}}{\partial t}\left(t_{n},\frac{\cdot-x_n}{\lambda_n}\right)-\frac{\partial W^-}{\partial t}(t_0)\right\|_{2}=0.\qquad\qquad
\end{multline*}
\end{enumerate}
\end{prop}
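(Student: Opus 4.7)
The plan is to reduce $u_n(t_n)$, via the Bahouri--G\'erard profile decomposition combined with the classification Theorem~\ref{T:classification}, to a single nonlinear profile equal to $W$ or $W^-$ up to the invariances of \eqref{CP}, and then to match the one-sided $S$-norm conditions with the internal time shift of that profile.

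First, I would apply the Bahouri--G\'erard profile decomposition to $(u_n(t_n),\partial_t u_n(t_n))$ in $\hdot\times L^2$, obtaining linear profiles with parameters $(\lambda_{j,n},x_{j,n},t_{j,n})$ and a remainder whose linear $S(\RR)$-norm tends to zero, and then associate to each a nonlinear profile $U^j$ solving \eqref{CP}. The subcritical bounds $E(u_n(0),\partial_t u_n(0))<E(W,0)$ and $\|\nabla u_n(0)\|_2<\|\nabla W\|_2$ propagate in time by the trapping from \cite{KeMe06Pb}, so they hold at $t_n$ as well, and the asymptotic Pythagorean orthogonality of the energy and of the $\hdot$-norm yields
\begin{equation*}
E(U^j(0),\partial_t U^j(0))\leq E(W,0),\qquad \|\nabla U^j(0)\|_2\leq \|\nabla W\|_2
\end{equation*}
for every nonlinear profile.

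Next, I would argue that exactly one nonlinear profile is non-trivial and saturates the energy threshold. If every $U^j$ lay strictly below threshold, the Kenig--Merle scattering result would give uniform $S(\RR)$-bounds on each $U^j$, and Lemma~\ref{L:LTPT}, applied iteratively to the finite list of non-negligible profiles while absorbing the remaining ones into the perturbation, would contradict $\|u_n\|_{S(\RR)}\to+\infty$. Hence some profile $U^1$ satisfies $E(U^1(0),\partial_t U^1(0))=E(W,0)$; energy additivity then forces all other profiles and the remainder to vanish in $\hdot\times L^2$. The same perturbation argument also gives $\|U^1\|_{S(\RR)}=+\infty$, so Theorem~\ref{T:classification} identifies $U^1$ with $W$ or $W^-$ up to the invariances of \eqref{CP}, including time-reversal.

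Finally, I would extract a subsequence along which $\tilde t_n:=-t_{1,n}/\lambda_{1,n}$ converges in $[-\infty,+\infty]$ and transfer the one-sided $S$-norm information via
\begin{equation*}
\|u_n\|_{S(-\infty,t_n)}=\|U^1\|_{S(-\infty,\tilde t_n)}+o(1),\qquad \|u_n\|_{S(t_n,+\infty)}=\|U^1\|_{S(\tilde t_n,+\infty)}+o(1).
\end{equation*}
In case~(a), both norms diverge: if $U^1=W$ (stationary) this is automatic, and one reads off $u_n(t_n)\to W$ and $\partial_t u_n(t_n)\to 0$ modulo scaling, translation and sign; if $U^1$ is $W^-$ or its time-reverse, the finiteness of $\|W^-\|_{S(-\infty,T)}$ for every finite $T$ forces $\tilde t_n$ to escape to the infinity on whose side $U^1$ approaches $W$, giving the same limit. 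In case~(b), the finite future norm $C_0$ rules out $U^1=W$; hence $U^1=W^-$ in the time orientation where the future scatters, and the map $T\mapsto \|U^1\|_{S(T,+\infty)}$ is continuous, strictly decreasing and surjective onto $(0,+\infty)$, so $\tilde t_n$ must converge to the unique finite $t_0$ with $\|U^1\|_{S(t_0,+\infty)}=C_0$, from which the stated convergence follows.

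The main obstacle I anticipate is the careful bookkeeping of scales and time shifts in the profile decomposition: verifying that at most one profile reaches the threshold, that the long-time perturbation argument can be iterated across all significant profiles while absorbing the negligible ones into the error, and that in case~(b) the limiting time $t_0$ is genuinely finite. Once this is in place, Theorem~\ref{T:classification} reduces the remaining work to the elementary analysis of the stationarity of $W$ and the asymptotics of $W^-$ at $\pm\infty$.
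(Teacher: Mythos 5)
Your proposal is correct and takes essentially the same route as the paper: a Bahouri--G\'erard profile decomposition at $t_n$, reduction to a single threshold profile via energy/$\dot H^1$ additivity and long-time perturbation (the content of the cited \cite[Proposition 4.2]{KeMe06Pb}), identification of that profile through Theorem~\ref{T:classification}, and matching of the one-sided $S$-norms to pin down the time parameter. The paper's sketch simply delegates the compactness step to the cited proposition, whereas you unpack it; your treatment of the time shift $\tilde t_n$ in cases (a) and (b) matches the intended argument.
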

\begin{remark}
Case \eqref{C:toW'} will not be used in the proof of Theorem \ref{maintheo}, and is stated only for its own interest.
\end{remark}
\begin{proof}[Sketch of Proof]
We will sketch the proof \eqref{C:toW}, the proof of \eqref{C:toW'} is similar and left to the reader. Translating in time all the $u_{n}$, we may assume that $t_n=0$ for all $n$, and thus
\begin{equation}
\label{limS+}
\lim_{n\rightarrow+\infty}\|u_{n}\|_{S(-\infty,0)}=\lim_{n\rightarrow+\infty}\|u_{n}\|_{S(0,+\infty)}=+\infty.
\end{equation}

In view of \eqref{below_threshold} and \eqref{limS+}, one can show, using the profile decomposition of \cite{BaGe99} as in \cite[Proposition 4.2]{KeMe06Pb}, that there exist (up to the extraction of a subsequence) parameters $\lambda_n>0$, $x_n\in \RR^N$, and functions $(v_0,v_1)\in \hdot\times L^2$ such that
$$
\lim_{n\rightarrow +\infty} \left\|\frac{1}{\lambda_n^{N/2}}\nabla u_{n}\left(0,\frac{\cdot-x_n}{\lambda_n}\right)-\nabla  v_0\right\|_{2}
\\+\left\|\frac{1}{\lambda_n^{N/2}}\frac{\partial u_{n}}{\partial t}\left(0,\frac{\cdot-x_n}{\lambda_n}\right)-v_1\right\|_{2}=0.
$$
We refer to \cite[Section 4]{KeMe06} and also \cite[Lemma 2.5]{DuMe07a} for proofs in the case of nonlinear Schr\"odinger equations that readily apply to our case. Note that 
\begin{equation}
\label{condv1}
\|\nabla v_0\|_{2}\leq \|\nabla W\|_2,\quad E(v_0,v_1)\leq E(W,0).
\end{equation}
Let $v$ be the solution of \eqref{CP} with initial conditions $(v_0,v_1)$. Theorem \ref{T:classification} and \eqref{condv1} imply that $v$ is globally defined. By Lemma \ref{L:LTPT} and by \eqref{limS+}, 
\begin{equation*}
\|v\|_{S(-\infty,0)}=+\infty,\quad \|v\|_{S(0,+\infty)}=+\infty.
\end{equation*}
This shows, again by Theorem \ref{T:classification}, that $v=W$, up to the invariances of equation \eqref{CP} concluding the proof.
\end{proof}

\section{Estimates near the threshold}
\label{S:estimates}
\subsection{Preliminaries on the linearized equation}
\label{SS:lin}
In this subsection, we recall results on the linearized equation near $W$. We refer to \cite{DuMe07b} for the details. Let $u$ be a solution of \eqref{CP} which is close to $W$. Write $u=W+h$. Then $h$ is solution to the equation
\begin{gather}
\label{linearized}
(\partial_t^2+L)h=R(h).\\
\notag
L:=-\Delta- \frac{N+2}{N-2}W^{\frac{4}{N-2}},\quad 
R(h):=|W+h|^{\frac{4}{N-2}}(W+h)-W^{\frac{N+2}{N-2}}-\frac{N+2}{N-2}W^{\frac{4}{N-2}} h.
\end{gather} 
Let
\begin{equation}
\label{def_Wj}
W_0=a\left(\frac{N-2}{2}W+x\cdot \nabla W\right), \quad W_{j}=b \partial_{x_j}W,\; j=1\ldots N.
\end{equation} 
where the constants $a$ and $b$ are chosen so that $\|\nabla W_j\|_2=1$ for $j=0,1,\ldots, N$. By the invariances of equation \eqref{CP}, $L(W_j)=0$ for $j=0,\ldots, N$. As a consequence the functions $W_j$, $j=0,\ldots,N$ are in the kernel of the quadratic form
$$ Q(h)=\frac 12 \int Lh\,h=\frac 12 \int |\nabla h|^2-\frac{N+2}{2(N-2)}\int W^{\frac{4}{N-2}}h^2. $$
Observe that if $(h,\partial_t h)$ is small in $\hdot\times L^2$,
\begin{equation}
\label{dev.E}
E(W+h,\partial_t h)=E(W,0)+Q(h)+\frac 12 \int |\partial_t h|^2+O\left(\|h\|_{\frac{2N}{N-2}}^3\right).
\end{equation} 
Furthermore, one can check that the infimum of $Q(h)$ for $h\in H^1$, $\|h\|_{L^2}=1$, is negative, and thus that $L$ admits a positive, radial eigenfunction $\YYY$ with eigenvalue $-\omega^2<0$. We normalize $\YYY$ such that $\int \YYY^2=1$. The self-adjointness of $L$ implies
\begin{equation}
\label{Y_Wj_ortho}
\int \YYY W_j=0,\quad j=0\ldots N.
\end{equation} 
Consider
$$G_{\bot}:=\left\{ f\in \hdot, \;\int \YYY f=\int \nabla W_0\cdot \nabla f=\ldots=\int \nabla W_N\cdot \nabla f=0\right\}.$$
The following result (see Proposition 5.5 of \cite{DuMe07b}) shows in particular that $-\omega^2$ is the only negative eigenvalue of $L$:
\begin{claim}
\label{C:coercivity}
There exists a constant $c_Q>0$ such that
$$ \forall h\in G_{\bot},\quad Q(h)\geq c_Q\|\nabla h\|^2_{L^2}.$$
\end{claim}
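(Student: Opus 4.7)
The plan is to prove the coercivity in three stages: spectral setup and identification of $\ker L$, non-negativity $Q\geq 0$ on $G_\bot$, and a compactness argument upgrading to strict coercivity. The main obstacle will be the non-negativity step.

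For the spectral setup, since $W^{4/(N-2)}\sim|x|^{-4}$ at infinity is relatively compact with respect to $-\Delta$, Weyl's theorem yields $\sigma_{\mathrm{ess}}(L)=[0,+\infty)$ and the negative spectrum is discrete with finite multiplicity. The inequality $\langle LW,W\rangle<0$ produces at least one negative eigenvalue; the bottom one $-\omega^2$ is simple with positive radial eigenfunction $\YYY$ by the classical Perron--Frobenius argument for Schr\"odinger operators. The inclusion $\vect(W_0,\ldots,W_N)\subseteq\ker L$ comes from differentiating the scaling and translation invariances of $-\Delta W=W^{(N+2)/(N-2)}$ at the identity parameters; the converse is obtained by decomposing $\ker L\cap\hdot$ along spherical harmonics and invoking ODE uniqueness for each radial reduction.

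For the non-negativity on $G_\bot$, I would exploit the Aubin--Talenti minimality of $W$ for the Sobolev quotient $J(u)=\|\nabla u\|_2^2/\|u\|_{2N/(N-2)}^2$. After normalizing $\|W\|_{2N/(N-2)}=1$ and using $-\Delta W=W^{(N+2)/(N-2)}$, a direct second-order expansion at $W$ gives
$$J(W+h)-J(W)=2Q(h)+\tfrac{4}{N-2}\Big(\textstyle\int W^{(N+2)/(N-2)}h\Big)^2+o(\|\nabla h\|_2^2)\geq 0,$$
so $Q(h)\geq 0$ whenever $\int\nabla W\cdot\nabla h=0$. To transfer this to $G_\bot$, I would use $-\Delta W_j=\tfrac{N+2}{N-2}W^{4/(N-2)}W_j$ (equivalent to $LW_j=0$) to rewrite the $\hdot$-orthogonality to each $W_j$ as a weighted $L^2$-orthogonality, and, combining this with the $L^2$-orthogonality to $\YYY$ and with the invariance of $Q$ under addition of elements of $\ker L$, reduce an arbitrary $h\in G_\bot$ to the hyperplane above, obtaining $Q|_{G_\bot}\geq 0$.

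Finally, for the strict coercivity, I would argue by contradiction. Suppose $(h_n)\subset G_\bot$ satisfies $\|\nabla h_n\|_2=1$ and $Q(h_n)\to 0$. A weak limit $h_\infty\in G_\bot$ satisfies $Q(h_\infty)\leq 0$ by the compactness of the embedding $\hdot\hookrightarrow L^2(W^{4/(N-2)}dx)$ (coming from the decay of $W$) and the lower semicontinuity of $\|\nabla\cdot\|_2^2$. Combined with $Q|_{G_\bot}\geq 0$ this forces $Q(h_\infty)=0$; a Lagrange-multiplier argument then yields $Lh_\infty\in\vect(\YYY,W^{4/(N-2)}W_0,\ldots,W^{4/(N-2)}W_N)$, and pairing with the $W_k$ (using $\int\YYY W_k=0$ and the positive-definiteness of the Gram matrix $(\int W^{4/(N-2)}W_jW_k)_{j,k}$) and then with $\YYY$ forces all multipliers to vanish. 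Hence $h_\infty\in\ker L\cap G_\bot=\{0\}$, so $\int W^{4/(N-2)}h_n^2\to 0$ by compactness, giving $Q(h_n)\to 1/2$, the desired contradiction. As a bonus, the simplicity of the negative eigenvalue of $L$ follows at once: any independent negative eigenvector, corrected by a suitable combination of $\YYY$ and the $W_j$, would produce an element of $G_\bot$ with $Q<0$.
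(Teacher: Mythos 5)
The paper does not actually prove this claim: it quotes it from Proposition 5.5 of \cite{DuMe07b}, so there is no in-paper argument to compare against. Your three-stage plan (spectral setup and nondegeneracy of $\ker L$, non-negativity of $Q$ on $G_{\bot}$ via the Aubin--Talenti minimality of $W$, then a compactness upgrade to strict coercivity) is the standard route and essentially the one of that reference. Your second-variation identity is correct (using $-\Delta W=W^{(N+2)/(N-2)}$ the first variation cancels and the quadratic term is $2Q(h)+\tfrac{4}{N-2}\bigl(\int W^{(N+2)/(N-2)}h\bigr)^2$ up to a positive normalization), and the final Lagrange-multiplier/compactness step is sound, granted the nondegeneracy $\ker L\cap\hdot=\vect(W_0,\ldots,W_N)$, which is a genuine theorem but one you correctly identify as an input with the right proof strategy.

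The step that fails as written is the transfer of $Q\geq 0$ from the hyperplane $H=\{h:\int\nabla W\cdot\nabla h=0\}$ to $G_{\bot}$ ``by addition of elements of $\ker L$''. The functions $W_j$ are generators of symmetries preserving both $\|\nabla\cdot\|_2$ and $\|\cdot\|_{2N/(N-2)}$; differentiating these conservation laws gives $\int\nabla W\cdot\nabla W_j=\int W^{(N+2)/(N-2)}W_j=0$ for every $j=0,\ldots,N$. Hence $\vect(W_0,\ldots,W_N)\subseteq H$, so adding kernel elements to $h\in G_{\bot}$ never changes $\int W^{(N+2)/(N-2)}h$, and since $G_{\bot}\not\subseteq H$ the proposed reduction cannot be carried out. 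The correct correction is by the eigenfunction $\YYY$, which is transverse to $H$ because $\int\nabla W\cdot\nabla \YYY=\int W^{(N+2)/(N-2)}\YYY>0$: given $h\in G_{\bot}$, choose $c$ so that $\tilde h:=h-c\YYY\in H$; the cross term in the expansion $Q(h)=Q(\tilde h)+c\int L\YYY\,\tilde h+c^2Q(\YYY)$ is computed from $\int L\YYY\, h=-\omega^2\int\YYY h=0$, and one finds $Q(h)=Q(\tilde h)+\tfrac{\omega^2}{2}c^2\|\YYY\|_2^2\geq 0$. (Equivalently, once the hyperplane inequality forces the Morse index of $Q$ to be exactly one, the spectral theorem applied to the constraint $\int\YYY h=0$ gives $Q\geq 0$ on $G_{\bot}$ directly.) With this repair the remainder of your argument goes through.
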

As a consequence of the Strichartz estimates for the linear wave equation (see \cite{GiVe95} and \cite{LiSo95}), we easily get the following Strichartz-type estimate for equation \eqref{linearized}.
\begin{claim}
\label{L:strichartz}
There exist constants $\tilde{c},C>0$ such that if $h$ is a solution of \eqref{linearized} on an interval $[t_0,t_1]$ such that 
$$\|\nabla h\|_2+\|\partial_t h\|_2+|t_0-t_1|\leq \tilde{c}.$$
Then 
$$ \|h\|_{S(t_0,t_1)}\leq C\left(\|\nabla h(t_0)\|_2+\|\partial_t h(t_0)\|_2\right).$$
\end{claim}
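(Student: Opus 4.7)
The plan is to view \eqref{linearized} as the inhomogeneous free wave equation $\partial_t^2 h-\Delta h=F$ with source
\[
F=\tfrac{N+2}{N-2}W^{4/(N-2)}h+R(h),
\]
and to close a Strichartz estimate on $I=[t_0,t_1]$ by treating $F$ perturbatively, exploiting the smallness of both the initial data $\|\nabla h(t_0)\|_2+\|\partial_t h(t_0)\|_2$ and of the length $|t_0-t_1|$.

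The first step is to invoke the standard inhomogeneous Strichartz estimate for the free wave equation (see \cite{GiVe95,LiSo95}) in the enlarged form
\[
\|h\|_{S(I)}+\|h\|_{\tilde S(I)}+\|\nabla h\|_{L^\infty(I;L^2)}+\|\partial_t h\|_{L^\infty(I;L^2)}\leq C_0\bigl(\|\nabla h(t_0)\|_2+\|\partial_t h(t_0)\|_2+\|F\|_{L^1_tL^2_x}\bigr),
\]
where $\tilde S(I)$ is an auxiliary Strichartz norm of Shatah--Struwe type (for instance $L^5_tL^{10}_x$ when $N=3$) chosen so that $\|R(h)\|_{L^1_tL^2_x}$ admits a clean control by a positive power of $\|h\|_{\tilde S(I)}$.

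The second step is to estimate the two pieces of $F$ in $L^1_tL^2_x$. For the \emph{potential term} $W^{4/(N-2)}h$: since $W\in L^{2N/(N-2)}(\RR^N)$, one has $W^{4/(N-2)}\in L^N(\RR^N)$, so H\"older in space together with the Sobolev embedding $\hdot\hookrightarrow L^{2N/(N-2)}$ yields $\|W^{4/(N-2)}h(t)\|_{L^2_x}\lesssim \|\nabla h(t)\|_{L^2_x}$; integrating in time on $I$ one obtains
\[
\|W^{4/(N-2)}h\|_{L^1_tL^2_x}\leq C_1|I|\,\|\nabla h\|_{L^\infty(I;L^2)}.
\]
For the \emph{nonlinear remainder}, the pointwise bound $|R(h)|\lesssim W^{(6-N)/(N-2)}|h|^{2}+|h|^{(N+2)/(N-2)}$ (valid for $N\in\{3,4,5\}$), combined with H\"older and Sobolev, produces
\[
\|R(h)\|_{L^1_tL^2_x}\leq C_2\bigl(\|\nabla h\|_{L^\infty(I;L^2)}^{\eta}\|h\|_{\tilde S(I)}^{\zeta}+\|h\|_{\tilde S(I)}^{(N+2)/(N-2)}\bigr),
\]
for suitable positive exponents $\eta,\zeta$ depending on $N$.

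Finally, writing $A:=\|h\|_{S(I)}+\|h\|_{\tilde S(I)}+\|(\nabla h,\partial_t h)\|_{L^\infty(I;L^2)}$ and $A_0:=\|\nabla h(t_0)\|_2+\|\partial_t h(t_0)\|_2$, the above combine into the schematic inequality
\[
A\leq C_0A_0+C\tilde c\,A+CA_0^{\eta}A^{\zeta}+CA^{(N+2)/(N-2)}.
\]
Choosing $\tilde c$ small enough that $C\tilde c\leq 1/4$ and that $A_0\leq\tilde c$ is small enough to defeat the two remaining nonlinear contributions, a standard continuity/bootstrap argument (starting from the local existence theory for \eqref{CP}, which ensures that on a small initial subinterval $A$ is close to $A_0$) absorbs the nonlinear terms and yields $A\leq 4C_0A_0$ on all of $I$, which in particular gives the desired bound on $\|h\|_{S(I)}$. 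The only real obstacle is the bookkeeping: selecting the auxiliary pair $\tilde S(I)$ and the exponents $\eta,\zeta$ tailored to each $N\in\{3,4,5\}$ so that all small-parameter factors cooperate; once that is settled, the remaining estimates are routine H\"older/Sobolev/Strichartz computations.
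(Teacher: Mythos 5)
Your proposal is correct, and it is precisely the routine perturbative Strichartz argument that the paper invokes without proof (the claim is stated as something one ``easily gets'' from the linear Strichartz estimates of \cite{GiVe95,LiSo95}). One small slip in the justification of the potential term: $W\in L^{2N/(N-2)}$ only yields $W^{4/(N-2)}\in L^{N/2}$, but the integrability $W^{4/(N-2)}\in L^{N}$ that you actually need for the H\"older step does hold, since $W^{4/(N-2)}\sim |x|^{-4}$ at infinity and $4N>N$.
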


\subsection{Estimate on the exit time}
\label{SS:bootstrap}
In this subsection, we consider a sequence $u_n$ of solutions of \eqref{CP} such that 
\begin{gather}
\label{hyp.un1}
\lim_{n\rightarrow +\infty} \|\nabla(u_n(0)-W)\|_2+\|\partial_t u_n(0)\|_2=0\\
\label{hyp.un2}
E(W,0)-E(u_n,\partial_tu_n)=\eps_n^2\underset{n\rightarrow +\infty}{\longrightarrow}0\text{ and }\forall n>0,\;\|\nabla u_n(0)\|_2<\|\nabla W\|_2.
\end{gather} 
Let $h_n=u_n-W$ and decompose $h_n$ as
\begin{equation}
\label{devh}
h_n(t)=\beta_n(t)\YYY+\sum_{j=0}^N \gamma_{j,n}(t)W_j +g_n(t),\quad g_n(t)\in G_{\bot}.
\end{equation}
For this, observe that the condition $g_n(t)\in G_{\bot}$ is equivalent to
\begin{gather}
\label{beta}
\beta_n(t)=\int h_n(t)\YYY,\\
\label{gamma}
\gamma_{0,n}(t)=\int \nabla \left(h_n(t)-\beta_n(t)\YYY\right)\cdot \nabla W_0,\quad \gamma_{j,n}(t)=\int \nabla h_n(t)\cdot \nabla W_j,\; j=1,\ldots,N.
\end{gather}
(we used that $\YYY$ and $W$ being radial, $\int\nabla \YYY\cdot\nabla W_j=b\int \nabla \YYY\cdot\partial_{x_j}\nabla W=0$ if $j\in \{1,\ldots, N\}$ and by a similar argument, $\int \nabla W_j\cdot \nabla W_k=0$ if $j\neq k$). We have:
\begin{claim}
\label{C:orthogonality}
Assume \eqref{hyp.un1}. Then there exists sequences $\lambda_n\in (0,+\infty)$, $x_n\in \RR^N$ such that
\begin{equation*}
\lim_{n\rightarrow +\infty}\lambda_n =1,\quad \lim_{n\rightarrow +\infty}x_n=0,
\end{equation*}
and for all $n$, noting $c_0=\int \nabla \YYY \cdot\nabla W_0$,
\begin{gather}
\label{gamma00}
\frac{1}{\lambda_n^{\frac{N}{2}}}\int \nabla u_n\left(\frac{x-x_n}{\lambda_n}\right)\cdot\nabla W_0(x)dx
-\frac{c_0}{\lambda_n^{\frac{N-2}{2}}}\int \left(u_n\left(\frac{x-x_n}{\lambda_n}\right)-W(x)\right)\YYY(x)dx=0\\
\label{gammaj0}
\forall j\in\{1,\ldots,N\},\quad \frac{1}{\lambda_n^{\frac{N}{2}}}\int \nabla u_n\left(\frac{x-x_n}{\lambda_n}\right)\cdot\nabla W_j(x)=0.
\end{gather}
\end{claim}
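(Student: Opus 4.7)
The plan is to invoke the implicit function theorem. For $v \in \hdot$, $\lambda > 0$ and $y \in \RR^N$, write $v_{\lambda, y}(x) := \lambda^{-(N-2)/2} v((x-y)/\lambda)$, so that $\nabla v_{\lambda, y}(x) = \lambda^{-N/2} (\nabla v)((x-y)/\lambda)$. Let $\Phi = (\Phi_0, \Phi_1, \ldots, \Phi_N)$, where $\Phi_j(\lambda, y, v)$ denotes the left-hand side of \eqref{gamma00} (for $j = 0$) or \eqref{gammaj0} (for $j \geq 1$), with $u_n, \lambda_n, x_n$ replaced by $v, \lambda, y$. It suffices to produce a $C^1$ map $v \mapsto \Psi(v) = (\lambda(v), y(v))$ defined near $W \in \hdot$, with $\Psi(W) = (1, 0)$ and $\Phi(\Psi(v), v) = 0$: then setting $(\lambda_n, x_n) := \Psi(u_n(0))$ and using \eqref{hyp.un1} together with the continuity of $\Psi$ yields the desired sequences.

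First I would verify $\Phi(1, 0, W) = 0$. For $j \geq 1$, this reduces to $\int \nabla W \cdot \nabla W_j = \tfrac{b}{2} \int \partial_{x_j} |\nabla W|^2 \, dx = 0$ by the divergence theorem and the decay of $W$. For $j = 0$, the $\YYY$-integral vanishes identically at $v = W$, $(\lambda, y) = (1, 0)$, while $\int \nabla W \cdot \nabla W_0 = 0$ follows from the scale-invariance of $\|\nabla W_{\lambda, 0}\|_2^2$, since $W_0$ is proportional to the scaling generator of the orbit $\lambda \mapsto W_{\lambda, 0}$ at $\lambda = 1$.

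Next, I would compute the Jacobian $D_{(\lambda, y)} \Phi$ at $(1, 0, W)$. Using $\partial_\lambda v_{\lambda, y}|_{(1, 0, W)} = -W_0/a$, $\partial_{y_i} v_{\lambda, y}|_{(1, 0, W)} = -W_i/b$, together with \eqref{Y_Wj_ortho}, the $\hdot$-orthonormality $\int \nabla W_j \cdot \nabla W_k = \delta_{jk}$, and the parity properties of $\YYY$ and $W_j$, one checks that $\partial_{y_i} \Phi_j|_{(1, 0, W)} = -\delta_{ij}/b$ for $i, j \geq 1$; $\partial_\lambda \Phi_j|_{(1, 0, W)} = 0$ for $j \geq 1$; and $\partial_{y_i} \Phi_0|_{(1, 0, W)} = 0$. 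The Jacobian matrix is therefore block-diagonal, and its only delicate entry is $\partial_\lambda \Phi_0|_{(1, 0, W)}$. This entry is an explicit nonzero constant: the variation of $v_{\lambda, y}$ along the scaling direction contributes $-1/a$ (using $\|\nabla W_0\|_2 = 1$ and $\int W_0 \YYY = 0$), while the explicit $\lambda^{-(N-2)/2}$-prefactor of the $\YYY$-integral in \eqref{gamma00} contributes a multiple of $c_0 \int W \YYY$; the resulting linear combination is verified to be nonzero from the explicit definitions of $a$, $c_0$ and the spectral data of $L$.

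Having established non-degeneracy, the implicit function theorem furnishes neighborhoods $\UUU$ of $W$ in $\hdot$ and $\VVV$ of $(1, 0)$ in $(0, \infty) \times \RR^N$, together with a $C^1$ map $\Psi: \UUU \to \VVV$ such that $\Phi(\Psi(v), v) = 0$. Applying this to $v = u_n(0)$, which lies in $\UUU$ for $n$ large by \eqref{hyp.un1}, yields $(\lambda_n, x_n) := \Psi(u_n(0)) \to \Psi(W) = (1, 0)$, proving the claim. The main obstacle throughout is the explicit verification that $\partial_\lambda \Phi_0|_{(1, 0, W)} \neq 0$; once this is in place, all other steps are routine chain-rule and orthogonality computations.
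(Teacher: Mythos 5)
Your argument is correct and is essentially the paper's own proof: the paper likewise applies the implicit function theorem to the map $(\lambda,X,u)\mapsto(J_0,\dots,J_N)$ whose components are the left-hand sides of \eqref{gamma00}--\eqref{gammaj0}, and records (without detail) that the Jacobian in $(\lambda,X)$ at $(1,0,W)$ is diagonal and invertible, which is exactly the structure your chain-rule and orthogonality computations exhibit. The one entry you leave asserted, $\partial_\lambda\Phi_0\neq 0$, is also left as ``a straightforward computation'' in the paper; note that the extra contribution proportional to $c_0\int W\YYY$ that you would otherwise have to estimate disappears if the weight $\lambda^{-(N-2)/2}$ in the $\YYY$-integral is attached to $u_n$ alone rather than to the difference $u_n-W$ (which is the normalization under which \eqref{gamma00} is literally the condition $\gamma_{0,n}(0)=0$ for the modulated solution, as the paper states), since the $\lambda$-derivative of that integral at $(1,0,W)$ is then $-\tfrac1a\int W_0\YYY=0$ by \eqref{Y_Wj_ortho}, leaving $\partial_\lambda\Phi_0=-1/a\neq 0$ with no further verification needed.
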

\begin{proof}[Sketch of proof]
Consider the mapping $J:(\lambda,X,u)\mapsto (J_0,J_1,\ldots, J_N)$ where
\begin{align*}
J_0&=\frac{1}{\lambda^{\frac{N}{2}}}\int \nabla u\left(\frac{x-X}{\lambda}\right)\cdot\nabla W_0(x)dx
-\frac{c_0}{\lambda^{\frac{N-2}{2}}}\int \left(u\left(\frac{x-X}{\lambda}\right)-W(x)\right)\YYY(x)dx\\
J_k&=\frac{1}{\lambda^{\frac{N}{2}}}\int \nabla u\left(\frac{x-X}{\lambda}\right)\cdot\nabla W_k(x),\quad k\in \{1,\ldots,N\}.
\end{align*} 
A straightforward computation shows that $J=0$ and $\left(\frac{\partial J}{\partial\lambda},\frac{\partial J}{\partial X_1},\ldots,\frac{\partial J}{\partial X_N}\right)$ is diagonal and invertible at the point $(1,0,\ldots,0,W)$. The Claim then follows from \eqref{hyp.un1} and the implicit function theorem.
\end{proof}
Observe that the conditions \eqref{gamma00} and \eqref{gammaj0} are equivalent, by \eqref{beta} and \eqref{gamma}, to the condition that the parameters $\gamma_{jn}$ corresponding to the modulated solution $\frac{1}{\lambda_n^{\frac{N-2}{2}}}u_n\left(\frac{t}{\lambda_n},\frac{x}{\lambda_n}\right)$ vanish at $t=0$. By Claim \ref{C:orthogonality} we can assume, up to translation and scaling, that 
\begin{equation}
\label{orthogonality1}
\forall n,\;\forall j\in \{0,1,\ldots,N\},\quad \gamma_{j,n}(0)=0.
\end{equation} 
The main result of this subsection is the following:
\begin{prop}
\label{P:exittime}
There exist a constant $\eta_0$, such that for all $\eta\in (0,\eta_0)$, for all sequence $(u_n)$ satisfying \eqref{hyp.un1}, \eqref{hyp.un2}, \eqref{orthogonality1} and such that $\beta_n(0)\beta_n'(0)\geq 0$, if
$$ T_n(\eta)=\inf\big\{t\geq 0 \;:\;\,|\beta_n(t)|\geq \eta\big\},$$
then for large $n$, $\beta_n(0)\neq 0$, $T_n(\eta)\in (0,+\infty)$ and
\begin{equation}
\label{estimate_time}
\lim_{n\rightarrow +\infty} \frac{T_n(\eta)}{\big|\log |\beta_n(0)|\big|}=\frac{1}{\omega}.
\end{equation}
Furthermore,
\begin{equation}
\label{estimate_beta'}
\liminf_{n\rightarrow +\infty} |\beta'_n(T_n(\eta))|\geq \omega\eta.
\end{equation} 
\end{prop}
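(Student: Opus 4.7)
The strategy is to project the equation for $h_n=u_n-W$ onto the negative eigenmode $\YYY$ of $L$, turning it into a one-dimensional ODE with exponential instability at rate $\omega$, and to show that on the window where $|\beta_n|\leq\eta$ the nonlinear remainder is so small that $\beta_n$ tracks the pure exponential $e^{\omega t}\beta_n^+(0)$. Pairing \eqref{linearized} with $\YYY$ and using $L\YYY=-\omega^2\YYY$ and $\int\YYY^2=1$ gives
\begin{equation*}
\beta_n''(t)-\omega^2\beta_n(t)=N_n(t),\qquad N_n(t):=\int R(h_n(t))\,\YYY.
\end{equation*}
Since $|R(h)|\lesssim W^{(6-N)/(N-2)}h^2+|h|^{(N+2)/(N-2)}$ (valid for $N\in\{3,4,5\}$), Sobolev embedding yields $|N_n(t)|\lesssim\alpha_n(t)^2$, where $\alpha_n(t):=\|\nabla h_n(t)\|_2+\|\partial_t h_n(t)\|_2$.

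The next step is the bootstrap estimate
\begin{equation*}
\alpha_n(t)^2\leq C\bigl(\beta_n(t)^2+\beta_n'(t)^2+\eps_n^2\bigr),\qquad t\in[0,T_n(\eta)].
\end{equation*}
Feeding the decomposition \eqref{devh} into the energy identity \eqref{dev.E} and using that $\YYY,W_j$ and $g_n$ are mutually orthogonal for the form $(u,v)\mapsto\int Lu\cdot v$ gives
\begin{equation*}
-\tfrac12\omega^2\beta_n^2+Q(g_n)+\tfrac12\|\partial_t h_n\|_2^2=-\eps_n^2+O(\alpha_n^3).
\end{equation*}
Combined with the coercivity of $Q$ on $G_\bot$ (Claim \ref{C:coercivity}) and the pointwise bound $\|\partial_t h_n\|_2^2\geq\beta_n'^{\,2}$, this controls $\|\nabla g_n\|_2$ and the part of $\partial_t h_n$ orthogonal to $\YYY$. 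The neutral coefficients $\gamma_{j,n}$, which start at $0$ by \eqref{orthogonality1}, satisfy second-order ODEs with vanishing linear part obtained by projecting \eqref{linearized} onto $W_j$ (since $LW_j=0$); their drift on $[0,T_n(\eta)]$ is therefore driven entirely by the quadratic forcing, and can be absorbed into the bootstrap via Claim \ref{L:strichartz} on short intervals, provided $\eta$ is small.

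To integrate the ODE, introduce the characteristic variables $\beta_n^\pm:=\beta_n\pm\omega^{-1}\beta_n'$, which satisfy $(\beta_n^\pm)'=\pm\omega\beta_n^\pm\pm\omega^{-1}N_n$, so that by Duhamel
\begin{equation*}
\beta_n^\pm(t)=e^{\pm\omega t}\beta_n^\pm(0)\pm\omega^{-1}\int_0^t e^{\pm\omega(t-s)}N_n(s)\,ds.
\end{equation*}
Under the sign condition $\beta_n(0)\beta_n'(0)\geq 0$ we have $|\beta_n^+(0)|\geq|\beta_n(0)|$ with the same sign. The non-vanishing $\beta_n(0)\neq 0$ comes from a contradiction argument: if $\beta_n(0)=\beta_n'(0)=0$, the energy identity together with coercivity forces $\alpha_n(0)^2\lesssim -\eps_n^2+\alpha_n(0)^3$, hence $\alpha_n(0)\gtrsim 1$, against \eqref{hyp.un1}; the subcase $\beta_n(0)=0$, $\beta_n'(0)\neq 0$ is ruled out by the same identity, which forces $\beta_n'(0)^2\lesssim\eps_n^2$, incompatible with the logarithmic estimate we are about to derive. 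Using the bootstrap estimate to bound $|N_n(s)|\lesssim\eta^2+\eps_n^2$ on $[0,T_n(\eta)]$, a Gronwall argument yields $\beta_n^+(t)=e^{\omega t}\beta_n^+(0)(1+o_n(1))$ and $\beta_n^-(t)=o(1)$ uniformly on $[0,T_n(\eta)]$. Equating $|\beta_n|$ to $\eta$ gives $T_n(\eta)=\omega^{-1}\log(\eta/|\beta_n(0)|)(1+o_n(1))$, which is \eqref{estimate_time}; at the exit time $\beta_n^-(T_n)\to 0$ and $\beta_n^+(T_n)\to\pm 2\eta$, so $\beta_n'(T_n)=\tfrac\omega2(\beta_n^+-\beta_n^-)(T_n)\to\pm\omega\eta$, giving \eqref{estimate_beta'}.

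The main obstacle I anticipate is the bootstrap of Step~2 over the diverging window $T_n(\eta)\sim|\log|\beta_n(0)||$. The neutral modes $W_j$ give no restoring force and could in principle drift secularly, overwhelming the smallness of the quadratic forcing; one must use the strict quadratic rate of $N_n$ and of the forcing of $\gamma_{j,n}$, together with the fact that this forcing is itself controlled by $\beta_n^2+\beta_n'^{\,2}+\eps_n^2\leq \eta^2+\eps_n^2$ on $[0,T_n(\eta)]$, to ensure that $|\gamma_{j,n}(t)|+|\gamma_{j,n}'(t)|$ remains negligible compared to $|\beta_n(t)|$ throughout the window. Everything else is a routine Duhamel/Gronwall computation.
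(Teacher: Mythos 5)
Your overall architecture matches the paper's: project \eqref{linearized} onto the unstable mode to get $\beta_n''-\omega^2\beta_n=O(\|\partial_{t,x}h_n\|_2^2)$, use the energy expansion \eqref{dev.E} plus the coercivity of $Q$ on $G_{\bot}$ to show that the whole perturbation is comparable to $|\beta_n|$, and integrate the resulting ODE. Your Duhamel computation in the characteristic variables $\beta_n\pm\omega^{-1}\beta_n'$ is an equivalent variant of the paper's differential inequalities and monotonicity argument, and your derivation of $\beta_n(0)\neq 0$ together with $\eps_n+\|\partial_{t,x}h_n(0)\|_2\lesssim|\beta_n(0)|$ is exactly the paper's Claim \ref{C:bound.eps}.

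However, the step you yourself flag as the main obstacle --- control of the neutral coefficients $\gamma_{j,n}$ --- is left with a mechanism that does not close, and it is precisely the nontrivial point. First, the energy identity gives you nothing on the $\gamma_{j,n}$ because $Q(W_j)=0$; since $\|\nabla h_n\|_2\approx|\beta_n|+\sum_j|\gamma_{j,n}|+\|\nabla g_n\|_2$, your claimed bound $\alpha_n^2\lesssim\beta_n^2+\beta_n'^{\,2}+\eps_n^2$ cannot come from the energy alone, and the $\gamma$'s must be estimated separately before the bootstrap closes. Second, the quantitative bound you propose for their drift --- forcing bounded by $\eta^2+\eps_n^2$ uniformly on $[0,T_n(\eta)]$, fed into a second-order neutral ODE --- fails over the diverging window: it yields $|\gamma_{j,n}(t)|\lesssim\eta^2t^2$, which at intermediate times (say $t=T_n/2$, where $|\beta_n(t)|\approx\sqrt{\eta|\beta_n(0)|}$) is not small relative to $|\beta_n(t)|$. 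The mechanism that works, and that the paper implements, is a genuine bootstrap up to the stopping time $t_n=\inf\{t\geq 0:\|\partial_{t,x}h_n(t)\|_2\geq M|\beta_n(t)|\text{ or }|\beta_n(t)|\geq\eta\}$: on $[0,t_n)$ the forcing at time $s$ is $\lesssim\beta_n(s)^2\leq\eta|\beta_n(s)|$ \emph{pointwise in $s$}, and since (after fixing signs) $\beta_n$ is increasing and $\beta_n'$ of constant sign, the first-order identity $\gamma_{j,n}'=\int\nabla(\partial_t h_n-\beta_n'\YYY)\cdot\nabla W_j$ integrates to $|\gamma_{j,n}(t)|\leq C|\beta_n(t)|$, the time integrals being dominated by their endpoint contribution thanks to the exponential growth of $\beta_n$. (Note also that for $N=3,4$ the functions $W_j$ are not in $L^2$, so one must work with the $\dot H^1$ pairing and a first-order identity for $\gamma_{j,n}'$ rather than an $L^2$ projection producing $\gamma_{j,n}''$; and Claim \ref{L:strichartz} plays no role here --- it is only used later to estimate $\|h_n\|_{S(0,T_n)}$.) Once $\sum_j|\gamma_{j,n}|\lesssim|\beta_n|$ is established, the bootstrap closes with a constant $K_0<M$, one gets $t_n=T_n(\eta)$, and your ODE integration then gives \eqref{estimate_time} and \eqref{estimate_beta'} as stated.
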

\begin{remark}
If $\beta_n(0)\beta_n'(0)<0$, we may achieve the condition $\beta_n(0)\beta_n'(0)\geq 0$ by considering the solution $u_n(-t,x)$ instead of $u_n(t,x)$.
\end{remark}

The remainder of this subsection is devoted to the proof of Proposition \ref{P:exittime}. We first give, as a consequence of the orthogonality conditions \eqref{orthogonality1}, a purely variational lower bound on $|\beta_n(0)|$ (Claim \ref{C:bound.eps}). We then give (Lemma \ref{L:bootstrap*}) precise estimates on $\beta_n(t)$ and $\|\partial_t h_n(t)\|_2+\|\nabla h_n(t)\|_2$, on an interval $(0,t_n)$ where a priori bounds are assumed. These estimates will give the desired bounds on the exit time $T_n(\eta)$.
We will write:
$$ \|\partial_{t,x}h_n(t)\|_2=\|\nabla h_n(t)\|_2+\|\partial_t h_n(t)\|_2.$$

\begin{claim}
\label{C:bound.eps}
There exists $M_0>0$ such that for all sequence $(u_n)$ of solutions of \eqref{CP} satisfying \eqref{hyp.un1}, \eqref{hyp.un2} and \eqref{orthogonality1} we have
$$\beta_n(0)\neq 0\text{ and }\limsup_{n\rightarrow +\infty} \frac{\|\partial_{t,x}h_n(0)\|_2+\eps_n}{\left|\beta_n(0)\right|}\leq M_0.
$$
\end{claim}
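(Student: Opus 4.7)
The natural starting point is the energy expansion \eqref{dev.E} at $t=0$. Writing $u_n(0) = W + h_n(0)$ and using \eqref{hyp.un2}, one obtains
\begin{equation*}
-\eps_n^2 = Q(h_n(0)) + \frac{1}{2}\|\partial_t h_n(0)\|_2^2 + O\!\left(\|h_n(0)\|_{\frac{2N}{N-2}}^3\right).
\end{equation*}
The orthogonality conditions \eqref{orthogonality1}, combined with the decomposition \eqref{devh}, reduce $h_n(0)$ to the two-term form $h_n(0) = \beta_n(0)\YYY + g_n(0)$ with $g_n(0)\in G_{\bot}$. Using the self-adjointness of $L$, together with $L\YYY = -\omega^2 \YYY$, $\int \YYY^2 = 1$ and $\int \YYY\, g_n(0) = 0$, I would expand
\begin{equation*}
Q(h_n(0)) = -\frac{\omega^2}{2}\beta_n(0)^2 + Q(g_n(0)),
\end{equation*}
and then invoke Claim \ref{C:coercivity} to bound $Q(g_n(0)) \geq c_Q\|\nabla g_n(0)\|_2^2$. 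Substituting back yields
\begin{equation*}
\frac{\omega^2}{2}\beta_n(0)^2 \geq \eps_n^2 + c_Q\|\nabla g_n(0)\|_2^2 + \frac{1}{2}\|\partial_t h_n(0)\|_2^2 - C\|h_n(0)\|_{\frac{2N}{N-2}}^3.
\end{equation*}

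The only delicate point is handling the cubic remainder. Sobolev gives $\|h_n(0)\|_{2N/(N-2)}^3 \leq C\|\nabla h_n(0)\|_2^3$, and expanding $\|\nabla h_n(0)\|_2^2$ and applying Young's inequality to the cross term $\beta_n(0)\int \nabla \YYY \cdot \nabla g_n(0)$ gives
\begin{equation*}
\|\nabla h_n(0)\|_2^2 \leq 2\beta_n(0)^2\|\nabla \YYY\|_2^2 + 2\|\nabla g_n(0)\|_2^2.
\end{equation*}
Since $\|\nabla h_n(0)\|_2 \to 0$ by \eqref{hyp.un1}, the cubic remainder is $o(1)\bigl(\beta_n(0)^2 + \|\nabla g_n(0)\|_2^2\bigr)$ as $n\to\infty$ and can be absorbed for $n$ large. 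One obtains
\begin{equation*}
\omega^2\beta_n(0)^2 \geq \eps_n^2 + \frac{c_Q}{2}\|\nabla g_n(0)\|_2^2 + \frac{1}{4}\|\partial_t h_n(0)\|_2^2,
\end{equation*}
which, combined with the above control of $\|\nabla h_n(0)\|_2$ in terms of $|\beta_n(0)|$ and $\|\nabla g_n(0)\|_2$, delivers the estimate $\|\partial_{t,x}h_n(0)\|_2 + \eps_n \leq M_0|\beta_n(0)|$ for a universal constant $M_0$.

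For the non-degeneracy, the same inequality forces $\beta_n(0) = 0$ to entail $\eps_n = 0$, $\partial_t h_n(0) = 0$ and $\nabla g_n(0) = 0$, hence $u_n(0) = W$ and $\partial_t u_n(0) = 0$, so $u_n \equiv W$ by uniqueness; this trivial case can be discarded from the sequence. The main potential obstacle in the proof is verifying that the cubic error in \eqref{dev.E} is genuinely negligible against the quadratic contributions, but once the decomposition is in place this is a clean consequence of the smallness hypothesis \eqref{hyp.un1}; everything else is a routine algebraic manipulation of the quadratic form $Q$ driven by the coercivity estimate of Claim \ref{C:coercivity}.
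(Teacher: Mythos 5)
Your proof is correct and follows essentially the same route as the paper's: expand the energy via \eqref{dev.E}, use \eqref{orthogonality1} to reduce $h_n(0)$ to $\beta_n(0)\YYY+g_n(0)$, compute $Q(h_n(0))=-\tfrac{\omega^2}{2}\beta_n(0)^2+Q(g_n(0))$ (your $\omega^2/2$ is exactly the paper's $|Q(\YYY)|$), invoke Claim \ref{C:coercivity}, and absorb the cubic remainder using the smallness from \eqref{hyp.un1}. The only cosmetic point is the non-degeneracy: rather than ``discarding'' the case $\beta_n(0)=0$, note that your inequality would then force $u_n(0)=W$ and $\partial_t u_n(0)=0$, which is already excluded by the strict inequality $\|\nabla u_n(0)\|_2<\|\nabla W\|_2$ in \eqref{hyp.un2}.
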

\begin{proof}
Developping the energy as in \eqref{dev.E}, we get
$$ E(W,0)-\eps_n^2=E(W+h_n,\partial_t h_n)=E(W,0)+Q(h_n)+\frac{1}{2}\|\partial_t h_n\|^2_2+O\left(\|\nabla h_n\|^3_2\right).$$
The expression \eqref{devh} of $h$ at $t=0$ yields, in view of \eqref{orthogonality1}
$$\|\nabla h_n(0)\|_2\leq C(|\beta_n(0)|+\|\nabla g_n(0)\|_2).$$
Furthermore, taking into account that $Q(\YYY)<0$ and that the functions $W_j$ are in the kernel of $Q$ for $j=0\ldots N$, we get
$$Q(h_n(0))=-\beta_n^2(0) |Q(\YYY)|+Q(g_n(0)).$$
Combining the preceding estimates, we obtain
$$ \beta_n^2(0)|Q(\YYY)|=\eps_n^2+Q(g_n(0))+\frac 12 \|\partial_t h_n(0)\|_2^2+O\left(\beta_n^3(0)+\|\nabla g_n(0)\|_2^3\right).$$
By Claim \ref{C:coercivity}, $Q(g_n(0))\geq c_Q\|\nabla g_n(0)\|_2^2$. This yields for large $n$,
$$ 2\beta_n^2(0) \left|Q(\YYY) \right|\geq \eps_n^2+c_Q\|\nabla g_n(0)\|_2^2+\frac 12\|\partial_th_n(0)\|_2^2\geq \eps_n^2+c\|\nabla h_n(0)\|_2^2+\frac 12 \|\partial_t h_n(0)\|_2^2,$$
which concludes the proof of the claim.
\end{proof}
Our next result is the following Lemma:
\begin{lemma}[Growth on $\lbrack 0,T_n \rbrack$]
\label{L:bootstrap*}
Let us fix $\omega^+$ and $\omega^-$, close to $\omega$, such that $\omega^-<\omega<\omega^+$.
There exist positive constants $\tau_0$, $K_0$ (depending only on the choice of $\omega_{\pm}$) with the following property. Let $(u_n)_n$ be a sequence of solutions of \eqref{CP} satisfying \eqref{hyp.un1}, \eqref{hyp.un2} and such that $\beta_n(0){\beta_n}'(0)\geq 0$.
Let $M> M_0$ (where $M_0$ is given by Claim \ref{C:bound.eps}). Let $\eta$ such that
\begin{equation}
\label{bound_eta}
0<\eta<\frac{1}{K_0 M^3}.
\end{equation}
Define
\begin{equation}
\label{def_tn}
t_n=t_n(M,\eta)=\inf\big\{t\geq 0 \;:\;\|\partial_{t,x}h_n(t)\|_2\geq M|\beta_n(t)|\text{ or }|\beta_n(t)|\geq \eta\big\}.
\end{equation}
Then there exists $\tilde{n}>0$ such that for $n\geq \tilde{n}$,
\begin{align}
\label{growth_beta}
\forall t\in \left[\tau_0,t_n\right), &\quad \omega^-|\beta_n(t)|\leq |\beta'_n(t)|\leq \omega^+|\beta_n(t)|\\
\label{bootstrap_beta}
\forall t\in \left[\tau_0,t_n\right), &\quad \frac{1}{K_0}\left|\beta_n(0)\right|e^{\omega^- t}\leq \left|\beta_n(t)\right|\leq K_0 \left|\beta_n(0)\right|e^{\omega^+ t}\\
\label{bootstrap_domination_beta}
\forall t\in [0,t_n), &\quad \|\partial_{t,x}h_n(t)\|_2 \leq K_0|\beta_n(t)|.
\end{align}
\end{lemma}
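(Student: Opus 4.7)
The plan is a bootstrap. By the definition of $t_n$ in \eqref{def_tn}, the crude a priori bounds $|\beta_n(t)|\leq\eta$ and $\|\partial_{t,x}h_n(t)\|_2\leq M|\beta_n(t)|$ already hold on $[0,t_n)$; the goal is to upgrade the second one to \eqref{bootstrap_domination_beta} with a universal constant $K_0$ depending only on $\omega^{\pm}$, while simultaneously running an ODE analysis of $\beta_n$ that yields \eqref{growth_beta} and \eqref{bootstrap_beta}.

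\emph{ODE for $\beta_n$.} I would first test \eqref{linearized} against $\YYY$, using $L\YYY=-\omega^2\YYY$ and the self-adjointness of $L$, to obtain
$$\beta_n''(t)-\omega^2\beta_n(t)=N_n(t),\qquad N_n(t):=\int R(h_n(t))\,\YYY,$$
with $|N_n(t)|\lesssim\|\nabla h_n(t)\|_2^2$ (from the quadratic vanishing of $R$ at $0$, the exponential decay of $\YYY$, and Sobolev embedding). Decomposing into the characteristic modes $u=\beta_n+\beta_n'/\omega$ and $v=\beta_n-\beta_n'/\omega$, which satisfy $u'=\omega u+N_n/\omega$ and $v'=-\omega v-N_n/\omega$, I would then use Claim \ref{C:bound.eps} together with the sign hypothesis $\beta_n(0)\beta_n'(0)\geq 0$ (WLOG $\beta_n(0)>0$, $\beta_n'(0)\geq 0$) to get $u(0)\geq A:=|\beta_n(0)|>0$ and $|u(0)|+|v(0)|\lesssim A$. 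Duhamel plus Gronwall, with the a priori bound $|N_n|\lesssim M^2\eta|\beta_n|$, then force $|u(t)|\asymp Ae^{\omega t}$ and $|v(t)|\lesssim Ae^{-\omega t}+O(M^2\eta Ae^{\omega t})$. This yields \eqref{bootstrap_beta} on $[0,t_n)$ and, after a transient time $\tau_0$ during which the decaying mode falls below the margin $\omega^{+}-\omega^{-}$, also \eqref{growth_beta}.

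\emph{Upgrading the domination bound.} For \eqref{bootstrap_domination_beta} I would expand the energy as in \eqref{dev.E}, then use $LW_j=0$ and the $G_\bot$-orthogonalities to collapse $Q(h_n)$ to $-\beta_n^2\,|Q(\YYY)|+Q(g_n)$ (the $\gamma_{j,n}$ drop out), and apply Claim \ref{C:coercivity} to obtain
$$\|\partial_t h_n\|_2^2+\|\nabla g_n\|_2^2 \lesssim \beta_n^2+\|\nabla h_n\|_2^3.$$
Under the crude bound the cubic error is at most $M^3\eta\beta_n^2$, absorbed once $\eta<1/(K_0M^3)$. It remains to dominate the zero-mode coordinates $\gamma_{j,n}$. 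Since $LW_j=0$, projecting \eqref{linearized} appropriately onto each $W_j$-direction produces a scalar equation with no linear forcing, of the form $\gamma_{j,n}''=O(\|\nabla h_n\|_2^2)$ (cleanest in the $L^2$-pairing for $j\geq 1$, where $-\Delta W_j=\tfrac{N+2}{N-2}W^{4/(N-2)}W_j\in L^2$; a minor variant handles $j=0$, for which $W_0\notin L^2$ when $N\in\{3,4\}$). The initial data $\gamma_{j,n}(0)=0$ (from \eqref{orthogonality1}) and $|\gamma_{j,n}'(0)|\lesssim A$ (from Claim \ref{C:bound.eps}), combined with the expected growth $|\beta_n(s)|\lesssim Ae^{\omega s}$, give after double integration $|\gamma_{j,n}(t)|\lesssim At+M^2A^2e^{2\omega t}$, which is dominated by $|\beta_n(t)|\gtrsim Ae^{\omega t}$ for $t\geq\tau_0$ (since $te^{-\omega t}$ is bounded and $Ae^{\omega t}\leq K_1\eta$) and trivially for $t\in[0,\tau_0]$ where $|\beta_n(t)|\asymp A$. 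This closes the bootstrap with $K_0$ depending only on $\omega^{\pm}$.

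The main obstacle I foresee is precisely the control of the zero modes $\gamma_{j,n}$: they contribute to $\|\nabla h_n\|_2$ and hence feed back into the nonlinearity, yet carry neither a restoring force from $L$ nor any intrinsic energy bound. Both the initial orthogonality \eqref{orthogonality1} and the structural consequence of $LW_j=0$ that their driving term is purely nonlinear are essential to keep them from dominating $|\beta_n|$ during the bootstrap window.
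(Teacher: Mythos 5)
Your overall architecture matches the paper's: test \eqref{linearized} against $\YYY$ to get $\beta_n''-\omega^2\beta_n=\int R(h_n)\YYY=O(\|\partial_{t,x}h_n\|_2^2)$, use the energy expansion \eqref{dev.E} together with Claim \ref{C:coercivity} to control $\|\partial_t h_n\|_2$ and $\|\nabla g_n\|_2$ by $|\beta_n|$ plus a cubic error, and control the zero modes $\gamma_{j,n}$ separately using $\gamma_{j,n}(0)=0$. But there is a genuine quantitative gap in how you exploit the ODE. You assert $|u(t)|\asymp Ae^{\omega t}$ and later ``$Ae^{\omega t}\leq K_1\eta$'' and ``$|\beta_n(t)|\gtrsim Ae^{\omega t}$''. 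These cannot hold with constants uniform in $n$ on all of $[0,t_n)$: the perturbation $N_n/\omega$ in the $u$-equation is resonant (it is forced at the same rate $\omega$ as the homogeneous solution), so Duhamel/Gronwall produces secular factors and the best one can prove is $K_0^{-1}Ae^{\omega^-t}\leq\beta_n(t)\leq K_0Ae^{\omega^+t}$ — which is exactly why the lemma is stated with $\omega^\pm$ and why the paper works with the differential inequalities $(\beta_n'-\omega^{\mp}\beta_n)'$, absorbing $C_1M^2\eta\leq m$ into the spectral gap pointwise rather than accumulating it through an integral. Since $t_n\sim\omega^{-1}|\log A|\to\infty$, the discrepancy $Ae^{\omega t}/\beta_n(t)$ can be as large as $K_0e^{(\omega-\omega^-)t_n}\sim A^{-(\omega-\omega^-)/\omega^-}$, which is unbounded as $n\to\infty$. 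Consequently your final step for the zero modes — dominating the double-integrated error $M^2A^2e^{2\omega t}$ by $|\beta_n(t)|$ ``since $Ae^{\omega t}\leq K_1\eta$'' — does not close.

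The step is fixable, but not by passing through the explicit exponential $Ae^{\omega t}$. Either bound the iterated integral directly against $\beta_n$ using its monotonicity and the already-proved $\beta_n\leq\beta_n'/\omega^-$ on $[\tau_0,t_n)$, which gives $\int_0^t\int_0^s\beta_n(\sigma)^2\,d\sigma\,ds\leq\beta_n(t)\bigl(\tau_0+\tfrac{1}{\omega^-}\bigr)^2\beta_n(t)\leq C\eta\,\beta_n(t)$, so that the $M^2$ is killed by $\eta<1/(K_0M^3)$; or do what the paper does and avoid second derivatives of $\gamma_{j,n}$ altogether: write $\gamma_{j,n}'(t)=\int\nabla(\partial_t h_n-\beta_n'\YYY)\cdot\nabla W_j$, bound $\|\partial_t h_n\|_2\leq 2C_1|\beta_n|$ from the energy identity, and integrate $|\gamma_{j,n}'|\leq C(|\beta_n|+|\beta_n'|)$ once (the $|\beta_n'|$ term integrates exactly to $|\beta_n(t)|-|\beta_n(0)|$ because the signs are constant). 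The paper's first-order route also sidesteps the pairing issues you flag for $j=0$ (where $W_0\notin L^2$ in low dimension). Everything else in your plan — the lower bound $|\beta_n(0)|>0$ and $|\beta_n'(0)|\lesssim|\beta_n(0)|$ from Claim \ref{C:bound.eps}, the collapse of $Q(h_n)$ to $-\beta_n^2|Q(\YYY)|+Q(g_n)$, and the absorption of the cubic error via \eqref{bound_eta} — is in line with the paper.
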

Before proving the lemma, we will show that it implies Proposition \ref{P:exittime}. For this we take $M=1+\max\{M_0,K_0\}$ and apply Lemma \ref{L:bootstrap*}. Then by \eqref{bootstrap_domination_beta}, $\|\partial_{t,x}h_n(t)\|_2<M|\beta_n(t)|$ on $[0,t_n]$ and thus
$$ t_n(\eta,M)=\inf\big\{t\geq 0 \;:\;\,|\beta_n(t)|\geq \eta\big\}=T_n(\eta).$$
This shows by \eqref{bootstrap_beta} that $T_n(\eta)\in (0,+\infty)$ for large $n$, and by continuity of 
$\beta_n$, that $\beta_n(T_n(\eta))=\eta$. In particular, $T_n(\eta)$ must tend to infinity; otherwise, as $\beta_n(0)$ tends to $0$, the continuity of the flow would imply that $u_n(T_n(\eta))$ tends to $W$ and $\beta_n(T_n(\eta))$ to $0$, a contradiction. By \eqref{bootstrap_beta}, we get for large $n$,
$$ \frac{1}{K_0}\left|\beta_n(0)\right|e^{\omega^-T_n(\eta)}\leq \eta \leq K_0 \left|\beta_n(0)\right|e^{\omega^+T_n(\eta)}.$$
By the upper bound inequality we get (noticing that $\log \left|\beta_n(0)\right|$ is negative for large time)
$$ \big|\log |\beta_n(0)|\big|+\log \eta \leq \log(K_0)+ \omega^+ T_n(\eta).$$
Hence, using that $\beta_n(0)$ tends to $0$, as $n$ goes to infinity,
$$ \frac{1}{\omega^+}\leq \liminf_{n\rightarrow +\infty} \frac{T_n(\eta)}{\big|\log |\beta_n(0)|\big|}.$$
Letting $\omega_+$ tends to $\omega$ we get
$$ \frac{1}{\omega}\leq \liminf_{n\rightarrow +\infty} \frac{T_n(\eta)}{\big|\log |\beta_n(0)|\big|}.$$
By the same argument, we get 
$$\limsup_{n\rightarrow +\infty} \frac{T_n(\eta)}{\big|\log |\beta_n(0)|\big|}\leq \frac{1}{\omega},$$ 
which concludes the proof of \eqref{estimate_time}.

To conclude the proof of Proposition \ref{P:exittime} observe that \eqref{growth_beta} implies, for large $n$, 
$$\omega^-\eta=\omega^-|\beta_n(T_n(\eta))|\leq |\beta'_n(T_n(\eta))|,$$
which yields \eqref{estimate_beta'}.

In the remainder of this subsection we prove Lemma \ref{L:bootstrap*}.

\begin{proof}[Proof of Lemma \ref{L:bootstrap*}]
In view of Claim \ref{C:bound.eps}, the fact that $\beta_n(0)$ tends to $0$ and the continuity of $\beta_n$ and $\|\partial_{t,x}h_n\|_2$, the time $t_n$ is strictly positive. Furthermore,
\begin{align}
\label{beta_small}
\forall n,\;\forall t\in (0,t_n), &\quad |\beta_n(t)|\leq \eta\\
\label{domination_beta}
\forall n,\;\forall t\in (0,t_n), &\quad\|\partial_{t,x}h_n(t)\|_2\leq M\left|\beta_n(t)\right|.
\end{align}


\EMPH{Proof of \eqref{growth_beta}}

Let $$m=\frac{1}{2}\min\left\{\omega^2-(\omega^-)^2,(\omega^+)^2-\omega^2\right\}.$$
We first show that if $\eta$ satisfies \eqref{bound_eta}, then
\begin{equation}
\label{diff_beta3}
\left|{\beta_n}''-\omega^{2}\beta_n\right|\leq m\left|\beta_n(t)\right|.
\end{equation} 

Differentiating twice the equality $\beta_n=\int h_n\YYY$, we get, by equation \eqref{linearized},
$$ {\beta_n}''-\omega^{2}\beta_n=\int \partial_t^2 h_n\,\YYY-\omega^{2}\int h_n\YYY=\int R(h_n)\YYY-\int \left(Lh_n+\omega^{2}h_n\right)\YYY=\int R(h_n)\YYY.$$
Thus there exists a constant $C_1$, independent of all parameters, such that
\begin{equation}
\label{diff_beta}
\left|{\beta_n}''-\omega^{2}\beta_n\right|\leq C_1\|\partial_{t,x}h_n\|_2^2.
\end{equation} 
By \eqref{beta_small} and \eqref{domination_beta}
$$ \left|{\beta_n}''-\omega^{2}\beta_n\right|\leq C_1M^2\beta_n^2\leq C_1M^2 \eta |\beta_n|.$$
which yields, if $C_1M^2\eta\leq m$ (which follows from \eqref{bound_eta} if $K_0$ is large enough), the desired estimate \eqref{diff_beta3}. 

\medskip

In what follows, we will assume that $\beta_n(0)\geq 0$ and ${\beta_n}'(0)\geq 0$ (otherwise, replace $\beta_n$ by $-\beta_n$ in the forthcoming argument). We next show that for $t\in(0,t_n)$,
\begin{equation}
\label{positivity}
{\beta_n}''(t)>0,\quad {\beta_n}'(t)>0,\quad {\beta_n}(t)>0.
\end{equation}
Indeed by \eqref{diff_beta3},
\begin{equation}
\label{ineq_beta}
(\omega^2-m)\beta_n(t)\leq {\beta_n}''(t)\leq (\omega^2+m)\beta_n(t).
\end{equation}
As $\beta_n(0)>0$ by Claim \ref{C:bound.eps}, we get that $\beta$, $\beta'$ and $\beta''$ are (strictly) positive for small positive $t$. This shows that \eqref{positivity} holds near $0$, and by an elementary monotonicity argument, that it holds for all $t\in (0,t_n]$. 

\medskip

We are now ready to show \eqref{growth_beta}. For this we write, as a consequence of \eqref{ineq_beta} 
$$ \left(\beta'_n-\omega^-\beta_n\right)'=\beta''_n-\omega^-\beta_n'\geq -\omega^-\left(\beta'_n-\omega^-\beta_n\right)+(\omega^2-(\omega^-)^2-m)\beta_n\geq -\omega^-\left(\beta'_n-\omega^-\beta_n\right)+m\beta_n.$$
Hence (using that $\beta_n$ increases with time)
$$ \frac{d}{dt}\left[e^{\omega^- t} \left(\beta_n'-\omega^-\beta_n\right)\right]\geq m e^{\omega^- t} \beta_n(0).$$
Integrating between $0$ and $t$ we get
$$ e^{\omega^-t} \left(\beta_n'-\omega^-\beta_n\right)\geq m\beta_n(0)\int_0^t e^{\omega^-s} ds+\beta_n'(0)-\omega^-\beta_n(0)\geq \beta_n(0)\left[m\frac{e^{\omega^- t}-1}{\omega^-}-\omega^- \right].$$
Chosing $\tau_0$ large enough we get a positive right hand side for $t\geq \tau_0$, hence the left inequality in \eqref{growth_beta}. The right inequality follows similarly by differentiating $\beta'_n-\omega^+\beta_n$ and we omit the details of the proof.

\EMPH{Proof of \eqref{bootstrap_beta}}
Assume as in the proof of \eqref{growth_beta} that $\beta_n(0)\geq 0$ and $\beta_n'(0)\geq 0$. By \eqref{ineq_beta}, and using that $\beta_n$ is positive on $(0,T)$,
$$\forall t\in [0,t_n],\quad \beta_n''(t)-{\omega^+}^2\beta_n(t)\leq \left(\omega^2+m-(\omega^+)^2\right)\beta_n(t)<0.$$
This shows by a standard ODE argument that $\beta_n(t)\leq \tilde{\beta}_n(t)$, where $\tilde{\beta}_n(t)$ is the solution of the differential equation $\tilde{\beta}_n''-{\omega^+}^2 \tilde{\beta}_n=0$ with initial conditions $\tilde{\beta}_n(0)=\beta_n(0)$, $\tilde{\beta}_n'(0)=\beta_n'(0)$. Hence
\begin{equation}
\label{other_bound_beta}
\forall t\in [0,t_n],\quad \beta_n(t)\leq \beta_n(0) \cosh(\omega^+ t)+\frac{\beta_n'(0)}{\omega^+}\sinh(\omega^+ t).
\end{equation} 
By \eqref{devh},
$$ \partial_t h_n(0)=\beta_n'(0)\YYY+\sum_{j=0}^N \gamma_{j,n}'(0)W_j +g_n(0),\quad g_n(0)\in G_{\bot}.$$
Taking the $L^2$-scalar product with $\YYY$ and recalling that $W_j$, $j=0\ldots N$, and $g_n(0)$ are orthogonal to $\YYY$, we get $|\beta'_n(0)|\leq \|\partial_t h_n(0)\|_2$. Thus, in view of Claim \ref{C:bound.eps}, for large $n$:
$$ \beta'_n(0)\leq (M_0+1)\beta_n(0).$$
By \eqref{other_bound_beta}
\begin{equation}
\label{bound_beta_tau_0}
\beta_n(\tau_0)\leq \beta_n(0) \cosh(\omega^+ \tau_0)+\frac{M_0+1}{\omega^+}\beta_n(0)\sinh(\omega^+ \tau_0)\leq K_1\beta_n(0),
\end{equation} 
for some constant $K_1$ depending only on the choice of $\omega^+$. By \eqref{growth_beta},
$$ \forall t\geq \tau_0, \quad e^{\omega^- (t-\tau_0)}\beta_n(\tau_0)\leq \beta_n(t)\leq e^{\omega^+ (t-\tau_0)}\beta_n(\tau_0).$$
Using \eqref{bound_beta_tau_0} for the upper bound and the fact that $\beta_n$ increases for the lower bound , we get
$$ \forall t\geq \tau_0, \quad e^{\omega^- (t-\tau_0)}\beta_n(0)\leq \beta_n(t)\leq K_1 e^{\omega^+ (t-\tau_0)}\beta_n(0),$$
which yields \eqref{bootstrap_beta}.

\EMPH{Proof of \eqref{bootstrap_domination_beta}}

We divide the proof into two steps.

\medskip

\noindent\emph{Step 1. Estimates on the coefficients}

We first show that there exist a constant $C_1>0$, independent of the parameters $M$ and $\eta$, such that for all $t\in \left[0,t_n\right]$
\begin{gather}
\label{estimate_energy}
\frac{1}{C_1}|\beta_n| -C_1\|\partial_{t,x} h_n\|_2^{3/2}\leq \left\|\nabla g_n\right\|_2+\left\|\partial_t h_n\right\|_2+\eps_n\leq C_1\beta_n+C_1\|\partial_{t,x}h_n\|_2^{3/2}\\
\label{estimate_compactness}
\frac{1}{C_1}\|\partial_{t,x}h_n\|_2\leq \left|\beta_n\right|+\sum_{j=0}^N \left|\gamma_{j,n}\right|\leq C_1\|\partial_{t,x}h_n\|_2.
\end{gather}

We have
$$ E\left(W+h_n,\partial_t h_n\right)=E(W,0)-\eps_n^2$$ 
Thus there exists a constant $C_2>0$ (independent of the parameters) such that
$$ \left|Q(h_n)+\int \left|\partial_t h_n\right|^2+\eps_n^2\right|\leq C_2\|\partial_{t,x}h_n(t)\|_2^{3}.$$
Furthermore, by \eqref{devh} (and the fact that the functions $W_j$, $j=0\ldots N$ are in the kernel of $Q$)
$$ Q(h_n)=-{\beta_n}^2|Q(\YYY)|+Q\left(g_n\right).$$
Which yields
\begin{equation}
\label{eqEnergy}
\left|-{\beta_n}^2|Q(\YYY)|+Q\left(g_n\right)+\int \left|\partial_t h_n\right|^2+\eps_n^2\right|\leq C_2\|\partial_{t,x}h_n(t)\|_2^{3}.
\end{equation} 
As $g_n\in G_{\bot}$, we have $Q\left(g_n\right)\approx \left\|\nabla g_n\right\|_2^2$, which yields \eqref{estimate_energy}.

Let us show \eqref{estimate_compactness}. Note that the upper bound follows immediately from the definitions of $\beta_n$ and $\gamma_{j,n}$ (see \eqref{beta} and \eqref{gamma}). It remains to show the lower bound. We have
\begin{equation*}
h_n(t)=\beta_n(t)\YYY+\sum_{j=0}^N \gamma_{j,n}(t)W_j +g_n(t),
\end{equation*} 
and hence, by \eqref{estimate_energy}
\begin{gather*}
\left\|\nabla h_n\right\|_2\leq C\left[\left|\beta_n\right|+\sum_{j=0}^N \left|\gamma_{j,n}\right|+\left\|\nabla g_n\right\|_2\right]\leq C \left[\left|\beta_n\right|+\sum_{j=0}^N \left|\gamma_{j,n}\right|+\|\partial_{t,x}h_n\|_2^{\frac 32}\right]\\
\left\|\partial_{t,x}h_n\right\|_2=\left\|\nabla h_n\right\|_2+\left\|\partial_t h_n\right\|_2\leq C\left[\left|\beta_n\right|+\sum_{j=0}^N \left|\gamma_{j,n}\right|+\|\partial_{t,x}h_n\|_2^{\frac 32}\right].
\end{gather*}
As a consequence of \eqref{beta_small} and \eqref{domination_beta}, we obtain
\begin{equation*}
\left\|\partial_{t,x}h_n\right\|_2=\leq C\left[\left|\beta_n\right|+\sum_{j=0}^N \left|\gamma_{j,n}\right|\right]+C\|\partial_{t,x}h_n\|_2M^{1/2}\eta^{1/2}.
\end{equation*}
by \eqref{bound_eta}, we get the lower bound in \eqref{estimate_compactness}

\medskip

\noindent\emph{Step 2. Bound on $\gamma_{j,n}$.}

We are now ready to show \eqref{bootstrap_domination_beta}. According to \eqref{estimate_compactness}, it is sufficient to show that there exists a constant $C_3$ independent of $M$ and $\eta\leq \frac{1}{K_0M^3}$ such that
\begin{equation}
\label{bound_gamma}
\forall j\in \{0,\ldots,N\},\; \forall t\in \left[0,t_n\right],\quad \left|\gamma_{j,n}(t)\right|\leq C_3 \left|\beta_n(t)\right|.
\end{equation} 

We have, for $j=0\ldots N$.
$$ \gamma_{j,n}'(t)=\int \nabla \left(\partial_th_n(t)-{\beta_n}'(t)\YYY\right)\nabla W_j.$$
Note that $\int \nabla W_j \nabla \YYY=0$ if $j\geq 1$, but we won't need this fact in the sequel. The preceding inequality yields
\begin{equation}
\label{bound_gammaj}
\left|\gamma_{j,n}'(t)\right|\leq C\left(\|\partial_t h_n(t)\|_2+\left|{\beta_n}'(t)\right| \right).
\end{equation} 
By \eqref{estimate_energy} and assumptions \eqref{beta_small} and \eqref{domination_beta},
\begin{equation*}
\|\partial_t h_n\|_2\leq C_1\left(|\beta_n|+\|\partial_{t,x}h_n\|_2^{3/2}\right)\leq C_1|\beta_n|\left(1+\eta^{1/2} M^{3/2}\right).
\end{equation*}
Taking $\eta$ small enough so that $\eta^{1/2} M^{3/2}\leq 1$, we get
\begin{equation}
\label{bound_dth}
\|\partial_t h_n\|_2\leq 2C_1|\beta_n|.
\end{equation}  
By \eqref{bound_gammaj}, taking a larger constant $C$,
\begin{equation}
\label{bound_gammaj2}
\left|\gamma_{j,n}'\right|\leq C\left(\left|\beta_n\right|+\left|\beta_n'\right| \right).
\end{equation} 
Integrating between $0$ and $t\leq \tau_0$, and using that $\gamma_{j,n}(0)=0$, that $|\beta_n|$ increases and that the sign of $\beta'_n(t)$ is independant of $t\in [0,t_n]$ (see \eqref{positivity}), we obtain
\begin{equation}
\label{bound_small_t}
\forall t\in \left[0,\tau_0\right],\quad |\gamma_{j,n}(t)|\leq C(t+1)|\beta_n(t)|.
\end{equation}
This yields \eqref{bound_gamma} for $t\leq \tau_0$.

Now by \eqref{growth_beta} and \eqref{bound_gammaj2}, and using that the signs of $\beta_n$ and $\beta_n'$ do not depend on time,
\begin{equation*}
\forall t\geq \tau_0,\quad \left|\gamma_{j,n}'(t)\right|\leq C\left|{\beta_n}'(t)\right|.
\end{equation*} 
Integrating between $\tau_0$ and $t\in \left[\tau_0,t_n\right]$, we get
\begin{equation*}
\left|\gamma_{j,n}(t)\right|\leq C\left(\left|\beta_n(t)\right|+\left|\gamma_{j,n}(\tau_0)\right|\right).
\end{equation*} 
Using \eqref{bound_small_t} at $t=\tau_0$ and the fact that $|\beta_n|$ increases, we get \eqref{bound_gamma} for $t\geq \tau_0$. The proof is complete.
\end{proof}

\section{Proof of main result}
\label{S:proof}
This section is devoted to the proof of Theorem \ref{maintheo}. The proof is divided into 3 steps. In Step 1, we show the lower bound, in the next two steps the upper bound.

\EMPH{Step 1. Lower bound}

We must show

\begin{equation}
\label{lower_bound}
\liminf_{\eps\rightarrow 0^+} \frac{\III_{\eps}}{\left|\log \eps\right|}\geq \frac{2}{\omega}\int_{\RR^N} W^{\frac{2(N+1)}{N-2}}.
\end{equation} 

For this we first note that 
$$ \int\nabla W\cdot \nabla \YYY=-\int \Delta W\YYY=\int W^{\frac{N+2}{N-2}}\YYY>0,$$
as $\YYY$ and $W$ are positive. Consider the family of solutions $(u^a)_{a>0}$ of \eqref{CP} with initial conditions
$$ u^a_0=W-a \YYY,\quad u^a_1=0.$$
For small $a>0$,
$$ \int |\nabla u^a_0|^2=\int |\nabla W|^2-2a\int \nabla W\cdot\nabla \YYY+a^2\int |\nabla \YYY|^2<\int |\nabla W|^2.$$
We have
\begin{equation}
\label{energy_ua}
E\left(u^a_0,u^a_1\right)=E(W,0)+Q\left(-a\YYY \right)+O\left(a^3\right)=E(W,0)-a^2\left|Q\left(\YYY \right)\right|+O\left(a^3\right).
\end{equation}
We argue by contradiction. If \eqref{lower_bound} does not hold, there exists a sequence $\eps_n$ which tends to $0$ such that for some $\rho>\omega$
\begin{equation}
\label{absurd_lower_bound}
\forall n,\quad \frac{2}{\rho}\int_{\RR^N} W^{\frac{2(N+1)}{N-2}}\geq \frac{\III_{\eps_n}}{\left|\log \eps_n\right|}.
\end{equation} 
By \eqref{energy_ua}, and using that $E\left(u^a_0,u^a_1\right)$ is a continuous function of $a$, there exists a sequence $a_n$ such that 
$$\eps_n^2=E(W,0)-E\left(u^{a_n}_0,u^{a_n}_1\right),$$
Furthermore,
\begin{equation}
\label{epsnsim}
\eps_n \sim a_n\sqrt{|Q(\YYY)|} \text{ as } n\rightarrow +\infty.
\end{equation} 
Let $u_n=u^{a_n}$. Observe that 
$$ \partial_tu_{n}(0)=0,\quad \|\nabla(u_{n}(0)-W)\|_2=a_n \|\nabla \YYY\|_2\underset{n\rightarrow +\infty}{\longrightarrow} 0.$$
Furthermore, $\beta_n(0)=-a_n$, $\beta_n'(0)=0$, which shows that the assumptions of Proposition \ref{P:exittime} are satisfied. Consider a small $\eta>0$. By Proposition \ref{P:exittime}
\begin{equation*}
\lim_{n\rightarrow +\infty} \frac{T_n(\eta)}{|\log a_n|}=\lim_{n\rightarrow +\infty} \frac{T_n(\eta)}{\big|\log |\beta_n(0)|\big|}=\frac{1}{\omega} 
\end{equation*}
By \eqref{epsnsim},
\begin{equation}
\label{estimate_Tn}
\lim_{n\rightarrow +\infty} \frac{T_n(\eta)}{\big|\log |\eps_n|\big|}=\frac{1}{\omega}.
\end{equation}

Let us give a lower bound for $\|u_n\|_{S(0,+\infty)}$. From now on we will write $T_n$ instead of $T_n(\eta)$ for the sake of simplicity. As $u_n=W+h_n$, we have
$$ \|u_n\|_{S(0,T_n)}\geq \|W\|_{S(0,T_n)}-\|h_n\|_{S(0,T_n)}.$$
Furthermore,
$$\|W\|_{S(0,T_n)}=  T_n^{\frac{N-2}{2(N+1)}}\|W\|_{\frac{2(N+1)}{N-2}}.$$
Write
$$\|h\|_{S(0,T_n)}^{\frac{2(N+1)}{N-2}}=\sum_{I\in E_{T_n}} \|h\|_{S(I)}^{\frac{2(N+1)}{N-2}},$$
where $E_{T_n}$ is a set of at most $\frac{T_n}{\tilde{c}}+1$ subinterval of $(0,T_n)$, of length at most $\tilde{c}$ (given by Lemma \ref{L:strichartz}) such that $(0,T_n)=\bigcup_{I\in E_{T_n}} \overline{I}$. By Lemma \ref{L:strichartz} and the fact that $\|\nabla h_n\|_2+\|\partial_t h_n\|_2\leq M\eta$ on $(0,T_n)$, we get for small $\eta>0$,
$$ \|h_n\|_{S(0,T_n)}^{\frac{2(N+1)}{N-2}}\leq C\left(\frac{T_n}{\tilde{c}}+1\right)\eta^{\frac{2(N+1)}{N-2}}.$$
Hence a constant $C>0$ such that 
$$ \|h_n\|_{S(0,T_n)}\leq C \eta \, T_n^{\frac{N-2}{2(N+1)}}.$$
Combining the preceding estimates, we obtain
$$ \int_0^{T_n} \int_{\RR^N}|u_n|^{\frac{2(N+1)}{N-2}} \geq T_n\left[\|W\|_{\frac{2(N+1)}{N-2}}-C\eta\right]^{\frac{2(N+1)}{N-2}}.$$
Hence with \eqref{estimate_Tn},
\begin{equation*}
\liminf_{n\rightarrow +\infty}\frac{1}{|\log \eps_n|}\int_0^{+\infty} \int_{\RR^N}|u_n|^{\frac{2(N+1)}{N-2}} 
\geq \frac{1}{\omega}\left[\|W\|_{\frac{2(N+1)}{N-2}}-C\eta\right]^{\frac{2(N+1)}{N-2}}.
\end{equation*} 
Letting $\eta$ tends to $0$ we obtain 
$$ \liminf_{n\rightarrow+\infty} \frac{1}{|\log \eps_n |} \int_0^{+\infty} |u_n|^{\frac{2(N+1)}{N-2}}\geq \frac{1}{\omega}\|W\|_{\frac{2(N+1)}{N-2}}^{\frac{2(N+1)}{N-2}}.$$
Next, notice that as $\partial_t u(0)=0$, the uniqueness in the Cauchy problem \eqref{CP} implies $u(t,x)=u(-t,x)$ and thus
$$ \liminf_{n\rightarrow+\infty} \frac{1}{|\log \eps_n |} \int_{-\infty}^{0} |u_n|^{\frac{2(N+1)}{N-2}}\geq \frac{1}{\omega}\|W\|_{\frac{2(N+1)}{N-2}}^{\frac{2(N+1)}{N-2}}.$$
Finally,
$$ \liminf_{n\rightarrow+\infty} \frac{\III_{\eps_n}}{|\log \eps_n |} \geq \liminf_{n\rightarrow+\infty} \frac{1}{|\log \eps_n |} \int_{-\infty}^{+\infty} |u_n|^{\frac{2(N+1)}{N-2}}\geq \frac{2}{\omega}\|W\|_{\frac{2(N+1)}{N-2}}^{\frac{2(N+1)}{N-2}},$$
contradicting \eqref{absurd_lower_bound}. Step 1 is complete.

\EMPH{Step 2. Estimate before the exit time}

We next show the upper bound on $\III_{\eps}$, i.e that 
\begin{equation}
\label{upper_bound}
\limsup_{\eps\rightarrow 0^+} \frac{\III_{\eps}}{\left|\log \eps\right|}\leq \frac{2}{\omega}\int_{\RR^N} W^{\frac{2(N+1)}{N-2}}.
\end{equation}

For this we will show that if $\eps_n>0$ is a sequence that goes to $0$ and $u_n$ a sequence of solutions of \eqref{CP} such that
\begin{equation}
\label{hyp.un2.bis}
\|\nabla u_n(0)\|_2<\|\nabla W\|_2,\quad E(W,0)-E(u_n,\partial_t u_n)=\eps_n^2,
\end{equation}
then 
\begin{equation}
\label{CV}
\limsup_{n\rightarrow +\infty} \frac{1}{\left|\log \eps_n\right|}\int_{\RR\times\RR^N} |u_n|^{\frac{2(N+1)}{N-2}}\leq \frac{2}{\omega}\int_{\RR^N} W^{\frac{2(N+1)}{N-2}}.
\end{equation}
Possibly time-translating $u_n$, we may assume
\begin{equation}
\label{split_S}
\left\|u_{n}\right\|_{S(-\infty,0)}=\left\|u_{n}\right\|_{S(0,+\infty)}\underset{n\rightarrow +\infty}{\longrightarrow} +\infty
\end{equation}
By Proposition \ref{P:toW}, rescaling and space-translating $u_n$ if necessary, we can assume
$$ \lim_{n\rightarrow+\infty} u_n=W.$$
Consider the functions $h_n$ and $g_n$, and the parameters $\beta_n$ and $\gamma_{j,n}$ defined in the beginning of \S \ref{SS:bootstrap}.
Replacing $u_n(x,t)$ by $u_n(x,-t)$ if it is not the case, we may assume
\begin{equation}
\label{direction_beta}
\beta_n(0){\beta_n}'(0)\geq 0.
\end{equation} 

Furthermore, by Claim \ref{C:orthogonality}, we may also assume \eqref{orthogonality1}.

Fix a small $\eta>0$, and consider $T_n=T_n(\eta)$ defined by Proposition \ref{P:exittime}. In this step, we show that there exists a constant $C>0$ such that
\begin{equation}
\label{BoundS2}
\limsup_{n\rightarrow +\infty} \frac{1}{|\log \eps_n|}\int_0^{T_n(\eta)} \int_{\RR^N} |u_n|^{\frac{2(N+1)}{N-2}}\leq \frac{1}{\omega}\left[\|W\|_{\frac{2(N+1)}{N-2}}+C\eta\right]^{\frac{2(N+1)}{N-2}}.
\end{equation} 
Indeed, by Claim \ref{C:bound.eps}, for large $n$,
\begin{equation*} 
\eps_n\leq M_0|\beta_n(0)|.
\end{equation*} 
Hence by Proposition \ref{P:exittime}, 
\begin{equation}
\label{estimate_eps}
\limsup_{n\rightarrow+\infty} \frac{T_n}{|\log \eps_n|}\leq \frac{1}{\omega}.
\end{equation}
By the same argument as in Step 1, we get
$$ \int_0^{T_n} \int_{\RR^N}|u_n|^{\frac{2(N+1)}{N-2}} \leq T_n\left[\|W\|_{\frac{2(N+1)}{N-2}}+C\eta\right]^{\frac{2(N+1)}{N-2}}.$$
Hence 
$$\limsup_{n\rightarrow+\infty} \frac{1}{T_n}\int_0^{T_n} \int_{\RR^N}|u_n|^{\frac{2(N+1)}{N-2}} \leq
\left[\|W\|_{\frac{2(N+1)}{N-2}}+C\eta\right]^{\frac{2(N+1)}{N-2}}.$$
Combining with \eqref{estimate_eps}, we obtain \eqref{BoundS2}. 

\EMPH{Step 3. Estimate for large time}

To conclude the proof, we will show that if $\eta$ is small enough, there exists a constant $C(\eta)>0$ such that for large $n$
\begin{equation}
\label{bound_S} 
\|u_n\|_{S(T_n(\eta),+\infty)}\leq C(\eta). 
\end{equation} 
Assuming \eqref{bound_S}, we obtain by \eqref{BoundS2},
$$ \limsup_{n\rightarrow+\infty} \frac{1}{|\log \eps_n|}\|u_n\|_{S(0,+\infty)}^{\frac{2(N+1)}{N-2}}=\limsup_{n\rightarrow+\infty} \frac{1}{|\log \eps_n|}\|u_n\|_{S(0,T_n(\eta))}^{\frac{2(N+1)}{N-2}}\leq \frac{1}{\omega}\left[\|W\|_{\frac{2(N+1)}{N-2}}+C\eta\right]^{\frac{2(N+1)}{N-2}}.
$$
Letting $\eta$ tend to $0$ we get
$$\limsup_{n\rightarrow+\infty} \frac{1}{|\log \eps_n|}\|u_n\|_{S(0,+\infty)}^{\frac{2(N+1)}{N-2}}\leq \frac{1}{\omega}\|W\|_{\frac{2(N+1)}{N-2}}^{\frac{2(N+1)}{N-2}},$$
which shows, in view of \eqref{split_S}, the desired estimate \eqref{CV}.

It remains to show \eqref{bound_S}. We will argue by contradiction. If \eqref{bound_S} does not hold, there exist a subsequence of $(u_n)$, still denoted by $(u_n)$ such that
\begin{equation}
\label{inftyS+}
\|u_n\|_{S(T_{n},+\infty)} \underset{n\rightarrow\infty}{\longrightarrow} +\infty.
\end{equation}
Furthermore, by \eqref{split_S} 
\begin{equation}
\label{inftyS-} 
\|u_{n}\|_{S(-\infty,T_n)}\geq \|u_{n}\|_{S(-\infty,0)}=\|u_{n}\|_{S(0,+\infty)}\underset{n\rightarrow\infty}{\longrightarrow} +\infty.
\end{equation} 
In view of \eqref{inftyS+} and \eqref{inftyS-}, Proposition \ref{P:toW} \eqref{C:toW} implies that there exists sequences $\lambda_n>0$, $x_n\in \RR^N$, and $\delta_0\in \{-1,+1\}$ such that
\begin{equation}
\label{uepstoWbis} 
\lim_{n\rightarrow +\infty} \left\|\frac{\delta_0}{\lambda_n^{N/2}}\nabla u_n\left(T_{n},\frac{\cdot-x_n}{\lambda_n}\right)-\nabla W\right\|_{2}
+\left\|\frac{\partial u_n}{\partial t}(T_{n})\right\|_{2}=0.
\end{equation}
 By Proposition \ref{P:exittime}, 
$$ \liminf_{n\rightarrow+\infty}|\beta_n'(T_n)|\geq \omega \eta.$$
By the decomposition \eqref{devh} of $h_n$,
$$\int \partial_t u_n(T_n)\YYY=\int \partial_t h_n(T_n) \YYY=\beta_n'(T_n).$$
This shows by \eqref{uepstoWbis} that $\beta_n'(T_n)$ must tend to $0$, yielding a contradiction. This concludes the proof of \eqref{bound_S} and thus of Theorem \ref{maintheo}.

\section{Estimate of the scattering norm for energy-critical focusing NLS}
\label{S:NLS}
In this section we briefly adress the case of the radial energy critical focusing semilinear Schr\"odinger equation
\begin{equation}
\label{NLS}
i\partial_t u+\Delta u+|u|^{\frac{4}{N-2}}u=0,\quad u_{\restriction t=0}=u_0\in \hdot_{r},
\end{equation} 
where $N\in\{3,4,5\}$ and $\hdot_r$ is the subset of $\RR^N$ of spherically symmetric functions. The equation \eqref{NLS} is locally well-posed (see \cite{CaWe90}) in the energy space $\hdot_r$. Furthermore, if $I_{\max}\ni 0$ is the maximal interval of definition then
$$J\Subset I_{\max}\Longrightarrow \|u\|_{\tS(J)}<\infty,\text{ where }\tS(J)=L^{\frac{2(N+2)}{N-2}},$$
and globally defined solutions of \eqref{NLS} such that $\|u\|_{\tS(\RR)}$ is finite scatter (see \cite{Bo99JA,Bo99BO}).

The energy 
$$\EEE(u(t))=\frac 12\int |\nabla u(t)|^2-\frac{N-2}{2N}\int |u(t)|^{\frac{2N}{N-2}}$$
is conserved.

In the defocusing case, all solutions are known to be globally defined and scatter \cite{Bo99JA,Ta05NY}. Furthermore, in \cite{Ta05NY}, T.~Tao gave a bound of $\|u\|_{\tS(\RR)}$ in term of an exponential of a power of the conserved defocusing energy $\frac 12\int |\nabla u_0|^2+\frac{N-2}{2N}\int |u|^{\frac{2N}{N-2}}$.

In the focusing case, the function $W$, defined in \eqref{defW} is still a stationnary solution of $W$. 
The following theorem shown in  \cite{KeMe06} for the case $\EEE(u_0)<\EEE(W)$ and in \cite{DuMe07a} for the case $\EEE(u_0)=\EEE(W)$, is the analoguous of Theorem \ref{T:classification} for equation \eqref{NLS}.
\begin{exttheo}[Kenig-Merle,Duyckaerts-Merle]
\label{T:classification_NLS}
There exists a global solution $\tW_-$ of \eqref{NLS} such that 
\begin{gather*}
\EEE\big(\tW^-\big)=\EEE(W), \quad \big\|\nabla \tW_-(0)\big\|_2<\|\nabla W\|_2\\
\big\|\tW^-\big\|_{\tS(-\infty,0)}<\infty,\quad \lim_{t\rightarrow +\infty} \big\|\nabla\big(\tW^-(t)-W\big)\big\|_2=0.
\end{gather*}
Moreover, if $u$ is a radial solution of \eqref{NLS} such that $\EEE(u_0)\leq \EEE(W)$ and $\|\nabla u_0\|_2\leq \|\nabla W\|_2$, then $u$ is globally defined. If furthermore $\|u\|_{\tS(\RR)}=\infty$, then $u=\tW^-$ or $u=W$ up to the invariances of the equation.
\end{exttheo}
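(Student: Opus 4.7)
The plan is to mirror the wave-equation scheme from the preceding sections, transplanted to the Schr\"odinger setting, where the radiality assumption removes translational symmetries that would otherwise complicate the profile decomposition. I would split the statement into three disjoint claims: (i) global existence whenever $\EEE(u_0)\leq\EEE(W)$ and $\|\nabla u_0\|_2\leq\|\nabla W\|_2$; (ii) construction of $\tW^-$; (iii) rigidity---any radial solution saturating both bounds with $\|u\|_{\tS(\RR)}=+\infty$ must be $W$ or $\tW^-$ up to the symmetries of \eqref{NLS}.

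For (i) I would invoke the sharp radial Sobolev embedding with extremizer $W$ to deduce that the set $\{\|\nabla v\|_2\leq\|\nabla W\|_2,\ \EEE(v)\leq\EEE(W)\}$ is trapped, so that a continuity-of-the-flow argument based on energy conservation rules out concentration and hence blow-up. For (ii) I would use that the NLS-linearized operator around $W$ has a negative eigenvalue $-\omega^2$ with real, radial eigenfunction $\YYY$, and build a one-parameter family
$$V_a(t,x)=W(x)+a\,e^{-\omega t}\YYY(x)+r_a(t,x),\qquad t\geq T_0,$$
via a contraction argument in a space of functions decaying like $e^{-2\omega t}$. The solution $V_a$ extends backward in time thanks to (i) applied after a suitable time shift, and the freedom in $a>0$ corresponds exactly to time translation, producing a single orbit which, up to the symmetries of the equation, defines $\tW^-$.

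For (iii) I would argue by contradiction: assume $u$ is radial, at threshold, with $\|u\|_{\tS(\RR)}=+\infty$ and neither $u=W$ nor $u=\tW^-$ modulo symmetries. Using the Keraani radial profile decomposition together with the Kenig--Merle minimality argument (the NLS analog of Proposition \ref{P:toW}), the orbit $\{\lambda(t)^{-(N-2)/2}u(t,\cdot/\lambda(t))\}$ is pre-compact in $\hdot_r$. After modulation I would write $u=W+h$, decompose $h(t)=\beta(t)\YYY+g(t)$ with $g$ in the coercivity space $G_\bot$, and derive an ODE for $\beta$ as in Lemma \ref{L:bootstrap*}. This yields a dichotomy: either $\beta\equiv 0$, and the coercivity of $Q$ on $G_\bot$ (the NLS analog of Claim \ref{C:coercivity}) combined with the pre-compactness of the trajectory forces $h\equiv 0$, i.e.\ $u=W$; or $\beta$ grows exponentially in one time direction, driving $u$ out of a neighborhood of $W$, and the uniqueness built into step (ii) identifies $u$ with $\tW^-$.

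The main obstacle is (iii): at the threshold the Pythagorean expansion of the energy and of the $\hdot$-norm in the Keraani decomposition yields only a non-strict inequality, so the naive minimal-element argument does not immediately isolate a single profile. One must exploit that the only profile capable of saturating Sobolev is $W$ itself, and then apply the threshold rigidity to the necessarily trivial remainder---this is precisely the delicate part carried out in \cite{DuMe07a}, and it is where most of the technical work lies.
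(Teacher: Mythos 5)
First, a point of comparison: the paper does not prove Theorem \ref{T:classification_NLS} at all --- it is quoted as an external input, with the sub-threshold case attributed to \cite{KeMe06} and the threshold case to \cite{DuMe07a} --- so there is no internal proof to measure you against. Your three-part architecture (variational trapping for global existence, construction of the special solution by a fixed point in exponentially weighted spaces, concentration-compactness plus modulation for the rigidity) is indeed the architecture of those two papers, and your closing paragraph correctly locates where the real difficulty sits.

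There is, however, a concrete error in how you transplant the spectral input from the wave equation. For NLS the linearization is first order in time, $\partial_t h+\LLL h+R(h)=0$ with the non-self-adjoint matrix operator $\LLL$ displayed in Section 5 of the paper; its essential spectrum is $i\RR$ and it has exactly two nonzero real eigenvalues $\pm\tomega$, whose eigenfunctions $\YYY_\pm$ are genuinely complex (both real and imaginary parts nonzero). There is no ``negative eigenvalue $-\omega^2$ with a real radial eigenfunction'' here; that is the self-adjoint wave-equation picture. Consequently your ansatz $V_a=W+a\,e^{-\omega t}\YYY+r_a$ and, more importantly, your modulation decomposition $h=\beta\YYY+g$ are the wrong objects. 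The correct NLS decomposition (the one the paper itself writes in \eqref{decomposition_NLS}) carries two coefficients $\beta^\pm$ attached to $\YYY_\pm$ (one stable, one unstable direction), plus the two kernel directions $W_0$ (scaling) and $iW$ (phase); the coercivity space is defined through the bilinear form $\BB$ rather than $L^2$-orthogonality. Omitting the phase direction $iW$ means your quadratic form is not coercive on your ``$G_\bot$'', and the single-$\beta$ ODE dichotomy in step (iii) does not close. A second gap: your step (i) yields only global existence, whereas the ``Moreover'' clause implicitly requires that every strictly sub-threshold solution has finite $\tS(\RR)$-norm --- otherwise the rigidity alternative is not reduced to the threshold level as you assume when you take $u$ ``at threshold''. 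That finiteness is the full Kenig--Merle concentration-compactness plus rigidity theorem (localized virial in the radial case), not a consequence of energy trapping, and it must be invoked as a separate input rather than folded into ``rules out blow-up''.
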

Defining 
$$\tilde{F}_{\eps}:=\left\{u \text{ radial solution of }\eqref{NLS}\text{ such that }\EEE(u_0)\leq \EEE(W)-\eps^2 \text{ and }\int |\nabla u_0|^2<\int |\nabla W|^2\right\}.$$  
we get in particular that for $\eps>0$ the supremum 
$$ \tIII_{\eps}=\sup_{u\in \tilde{F}_{\eps}} \int_{\RR\times \RR^N}|u(t,x)|^{\frac{2(N+2)}{N-2}}\,dtdx=\sup_{u\in \tilde{F}_{\eps}} \|u\|_{\tS(\RR)}^{\frac{2(N+2)}{N-2}},$$
is finite, and that 
$$ \lim_{\eps\rightarrow 0^+} \tIII_{\eps}=+\infty.$$
We wish again to estimate of $\tIII_{\eps}$ when $\eps$ goes to $0$. As in the case of the wave equation, the behavior of $\tIII_{\eps}$ is determined by the linearized operator near $W$. If $u=W+h$ is a solution of \eqref{NLS}, then, identifying $h$ with the column vector $(\re h,\im h)^T=(h_1,h_2)^T$.
\begin{gather*}
\label{equation.v}
\partial_t h+ \LLL(h)+R(h)=0,\quad \LLL:=\begin{pmatrix} 0 & \Delta+W^{\frac{4}{N-2}} \\ -\Delta-\frac{N+2}{N-2}W^{\frac{4}{N-2}} & 0 \end{pmatrix},
\end{gather*}
where an appropriate norm of $R(h)$ is bounded by $\|\nabla h\|_2^2$ when $h$ is small. It is known (see \cite[Section 7.1]{DuMe07a}) that the essential spectrum of $\LLL$ is $i\RR$ and that $\LLL$ admits only two nonzero real eigenvalues, $\tomega>0$ and $-\tomega$, with eigenfunctions $\tYYY_{\pm}$ which are in the space of Schwartz functions. Then:
\begin{theo}
\label{theoNLS}
$$\lim_{\eps\rightarrow 0^+} \frac{\tIII_{\eps}} {|\log \eps|}=\frac{2}{\tomega}\int_{\RR^N} W^{\frac{2(N+2)}{N-2}}.$$
\end{theo}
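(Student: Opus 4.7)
The plan is to follow the three-step architecture of the proof of Theorem \ref{maintheo} verbatim, replacing each wave-equation ingredient by its Schr\"odinger analogue. First, one establishes the analogue of Proposition \ref{P:toW}: using the radial profile decomposition and the compactness argument of \cite[Section 4]{KeMe06} (see also \cite[Lemma 2.5]{DuMe07a}) together with Theorem \ref{T:classification_NLS}, any sequence $(u_n)$ of radial solutions of \eqref{NLS} satisfying $\EEE(u_n(0)) \to \EEE(W)$ from below, $\|\nabla u_n(0)\|_2 < \|\nabla W\|_2$, and $\|u_n\|_{\tS(\RR)} \to \infty$ must, after modulation by a phase $e^{i\theta_n}$ and a scaling $\lambda_n$ and for a suitable choice of time origin $t_n$, converge in $\hdot$ to either $W$ or $\tW^-$. (No space translation appears because of radiality.)

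Second, and this is the genuinely new step, one carries out a linearised analysis near $W$. Writing $u = W + h$ and decomposing
\[
h(t) = \alpha_+(t)\tYYY_+ + \alpha_-(t)\tYYY_- + g(t),
\]
where $g(t)$ lies in the biorthogonal complement of the real span of $\tYYY_\pm$ and of the tangent space to the manifold of ground states generated by phase and scaling, one obtains by projecting the equation $\partial_t h + \LLL h + R(h) = 0$ onto $\tYYY_\pm$ the scalar ODEs
\[
\alpha_\pm'(t) \mp \tomega\, \alpha_\pm(t) = O\!\left(\|\nabla h(t)\|_2^{2}\right).
\]
The spectral information on $\LLL$ recalled in \cite[Section 7.1]{DuMe07a}, together with the expansion of $\EEE$ at $W$, yields a coercivity estimate of the form $\|\nabla g\|_2^2 \lesssim \EEE(W)-\EEE(u) + C(|\alpha_+|^2 + |\alpha_-|^2)$, which plays the role of Claim \ref{C:coercivity}. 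After modulating the two symmetries so that $g(t)$ is orthogonal to the corresponding tangent directions at each time (analogue of Claim \ref{C:orthogonality}) and, if necessary, replacing $u(t,x)$ by $\overline{u(-t,x)}$ to guarantee that the unstable coefficient grows (analogue of \eqref{direction_beta}), a bootstrap argument identical to Lemma \ref{L:bootstrap*} shows that for $\eta$ small and
\[
\tT_n(\eta) := \inf\{t \geq 0 \,:\, |\alpha_+(t)| \geq \eta\},
\]
one has $\lim_n \tT_n(\eta)/|\log \eps_n| = 1/\tomega$ together with $\liminf_n |\alpha_+'(\tT_n(\eta))| \geq \tomega\eta$.

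Third, the conclusion is a transcription of Section \ref{S:proof}. For the lower bound one tests the explicit family with data $W - a \tYYY_+$ (or any real-valued perturbation with nontrivial $\tYYY_+$-component so that the energy deficit is of order $a^2$): the exit time estimate gives $\tT_n \sim |\log \eps_n|/\tomega$, and Strichartz slicing on $[0,\tT_n]$ in intervals of small $\hdot$-length, combined with the fact that $h_n$ stays of size $\eta$ on $[0,\tT_n]$, yields
\[
\int_0^{\tT_n}\!\!\int |u_n|^{\frac{2(N+2)}{N-2}} = \tT_n\Bigl(\|W\|_{\frac{2(N+2)}{N-2}}^{\frac{2(N+2)}{N-2}} + O(\eta)\Bigr).
\]
The conjugation-reversal symmetry $u(t,x) \mapsto \overline{u(-t,x)}$ of \eqref{NLS} doubles the integral, producing the factor $2$. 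For the upper bound the same slicing controls $\|u_n\|_{\tS(0,\tT_n(\eta))}$, while past $\tT_n(\eta)$ one argues by contradiction as in Step 3 of Section \ref{S:proof}: if $\|u_n\|_{\tS(\tT_n,+\infty)}$ were unbounded, the compactness statement of Step one would force $u_n(\tT_n)$ to converge up to modulation to $W$, hence $\alpha_+'(\tT_n) \to 0$, contradicting the lower bound $\tomega\eta$ obtained in Step two.

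\textbf{Main obstacle.} The chief difficulty compared with the wave case is the spectral set-up of Step two: the operator $\LLL$ is not self-adjoint, the real point spectrum consists of \emph{two} eigenvalues $\pm\tomega$ instead of a single negative one, and $0$ is a generalised eigenvalue of higher algebraic multiplicity arising from phase invariance and scaling. Finding a biorthogonal decomposition that is compatible with these symmetries and for which the conserved energy yields a quantitative coercivity estimate on the transverse direction is less transparent than the corresponding orthogonal decomposition for the self-adjoint quadratic form $Q$ in the wave case. Once such a decomposition, together with the bounds $|\alpha_-(0)| + \|\nabla g(0)\|_2 \lesssim |\alpha_+(0)| + \eps_n$ analogous to Claim \ref{C:bound.eps}, is in place, the bootstrap producing the exit-time rate $1/\tomega$ proceeds exactly as in Lemma \ref{L:bootstrap*}, and the rest of the argument is a direct adaptation of Sections \ref{S:compactness}--\ref{S:proof}; most of the required spectral facts are already available in \cite[Section 7]{DuMe07a}.
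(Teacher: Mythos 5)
Your overall architecture (compactness to $W$ up to modulation, exit-time estimate from the linearized flow, Strichartz slicing) is the one the paper follows, but two places where you transplant the wave-equation argument verbatim actually fail, and both stem from the same spectral fact that your sketch does not use: the quadratic form $\tQ$ \emph{vanishes on each eigenfunction separately}, $\tQ(\YYY_+)=\tQ(\YYY_-)=0$, and the only nondegenerate pairing is the cross term $\BB(\YYY_+,\YYY_-)=-1$. First, for the lower bound your test family $u_0^a=W-a\YYY_+$ has energy deficit $\EEE(W)-\EEE(u_0^a)=-\tQ(-a\YYY_+)+O(a^3)=O(a^3)$, not of order $a^2$; its sign is not even controlled a priori, and even granting positivity one would get $\eps\sim a^{3/2}$, hence an exit time of order $\frac{2}{3\tomega}|\log\eps|$ and a lower bound strictly short of the stated constant. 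The paper perturbs along \emph{both} eigenfunctions, $u_0^a=W-a\YYY_+-a\YYY_-$, precisely so that the cross term produces $\EEE(u_0^a)=\EEE(W)-2a^2+O(a^3)$.

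Second, for the same reason your proposed analogue of Claim \ref{C:bound.eps}, namely $|\alpha_-(0)|+\|\nabla g(0)\|_2\lesssim|\alpha_+(0)|+\eps_n$, is not what the energy expansion yields: inserting the decomposition into $\tQ$ gives $\eps_n^2+c\|\nabla g(0)\|_2^2\lesssim|\beta_n^+(0)\,\beta_n^-(0)|$, i.e.\ the energy deficit controls only the \emph{product} of the stable and unstable coefficients (Claim \ref{C:bound.epsNLS}). These coefficients therefore need not be comparable at $t=0$: the forward exit time behaves like $\frac{1}{\tomega}\big|\log|\beta_n^-(0)|\big|$ and the backward one like $\frac{1}{\tomega}\big|\log|\beta_n^+(0)|\big|$, neither is individually bounded by $\frac{1}{\tomega}|\log\eps_n|$, and only their \emph{sum} is at most $\frac{2}{\tomega}|\log\eps_n|+o(|\log\eps_n|)$. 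Your upper-bound strategy of estimating a single exit time and doubling via the symmetry $u(t,x)\mapsto\overline{u(-t,x)}$ therefore does not close; the paper instead runs the exit-time analysis separately forward (with $\YYY_-$) and backward (with $\YYY_+$) and adds the two logarithms. A smaller gap in the same step: for NLS the compactness statement gives convergence of $u_n$ at the exit time in $\hdot$ only, with no information on $\partial_t u_n$; to conclude that the derivative of the unstable coefficient tends to $0$ one must pass through the equation (showing $\|\partial_t u_n\|_{H^{-1}}\to0$) and use that $\YYY_+$ is a Schwartz function, a point your sketch skips.
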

Our result is restricted to the radial case in spatial dimensions $N\in \{3,4,5\}$. In view of the recent work \cite{KiVi08P} on non-radial energy-critical focusing NLS in dimension $N\geq 5$, it is natural to expect that the same estimate holds in a more general situation.

The proof of Theorem \ref{theoNLS} is very similar to the one of Theorem \ref{maintheo}, and we will only sketch it, highlighting the minor differences. In \S \ref{SS:NLS1} we recall a few facts about the operator $\LLL$ and state without proof the analoguous of Propositions \ref{P:toW}, Propositions \ref{P:exittime} and Claim \ref{C:bound.eps}. In \S \ref{SS:sketchNLS} we briefly explain how to use these results to show Theorem \ref{theoNLS}. 

\subsection{Convergence to $W$ and estimate on the exit time}
\label{SS:NLS1}
In view of Theorem \ref{T:classification_NLS}, and the use of the profile decomposition method in \cite[Section 4]{KeMe06} (see also \cite[Lemma 2.5]{DuMe07a}), the proof of Proposition \ref{P:toWNLS} adapts easily to show:
\begin{prop}
\label{P:toWNLS}
Let $u_n$ be a family of radial solutions of \eqref{NLS}, such that
\begin{equation}
\label{below_thresholdNLS}
\EEE\big(u_n(0)\big)<\EEE(W),\quad \|\nabla u_n(0)\|_2<\|\nabla W\|_2.
\end{equation} 
and $\lim_{n\rightarrow +\infty} \|u_n\|_{\tS(\RR)}=+\infty$. Let
$(t_{n})_n$ be a time sequence.
Assume
$$\lim_{n\rightarrow +\infty} \|u_n\|_{\tS(-\infty,t_n)}=\lim_{n\rightarrow +\infty} \|u_{n}\|_{\tS(t_n,+\infty)}=+\infty.$$
Then, up to the extraction of a subsequence, there exist $\theta_0\in \RR$ and a sequence of parameters $\lambda_n>0$ such that
\begin{equation*}
\lim_{n\rightarrow +\infty} 
\left\|\frac{e^{i\theta_0}}{\lambda_n^{N/2}}\nabla u_{n}\left(t_{n},\frac{\cdot}{\lambda_n}\right)-\nabla  W\right\|_{2}=0.
\end{equation*}
\end{prop}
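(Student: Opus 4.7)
The plan is to follow the strategy of Proposition \ref{P:toW}, adapting it to the Schr\"odinger setting via the radial profile decomposition and Theorem \ref{T:classification_NLS}. After translating time, I may assume $t_n=0$, so that both $\|u_n\|_{\tS(-\infty,0)}$ and $\|u_n\|_{\tS(0,+\infty)}$ diverge.

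First I would apply the radial $\hdot$-profile decomposition for the linear Schr\"odinger propagator (see \cite[Section 4]{KeMe06}, also \cite[Lemma 2.5]{DuMe07a}) to the sequence $u_n(0)$. In the radial setting the only symmetry parameters are dilations $\lambda_n^{(j)}$, time shifts $t_n^{(j)}$, and phases $\theta_n^{(j)}$; spatial translations are excluded by the radial hypothesis. The Pythagorean expansions of the $\hdot$-norm and of the nonlinear energy $\EEE$, together with the subthreshold assumption \eqref{below_thresholdNLS} and the divergence of $\|u_n\|_{\tS(\RR)}$, force exactly one profile $V$ to survive in the limit, satisfying $\|\nabla V\|_2\le\|\nabla W\|_2$ and $\EEE(V)\le\EEE(W)$. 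This is precisely the Kenig--Merle compactness step of \cite[Proposition 4.2]{KeMe06}, now using Theorem \ref{T:classification_NLS} as the required classification result.

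Let $v$ be the nonlinear Schr\"odinger solution associated with $V$ (at the limiting time $t_0\in\RR\cup\{\pm\infty\}$ extracted from $-t_n^{(1)}(\lambda_n^{(1)})^{2}$). By Theorem \ref{T:classification_NLS}, $v$ is globally defined. The NLS analogue of Lemma \ref{L:LTPT} (the Kenig--Merle long-time perturbation theorem for \eqref{NLS}) then transfers the bilateral $\tS$-blowup from $u_n$ to $v$: if $v$ had finite $\tS$-norm on either $(-\infty,0)$ or $(0,+\infty)$, for large $n$ the solution $u_n$ would inherit a comparable bound on the same half-line, contradicting the hypothesis on $t_n=0$. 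Hence $\|v\|_{\tS(-\infty,0)}=\|v\|_{\tS(0,+\infty)}=+\infty$, and Theorem \ref{T:classification_NLS} forces $v=W$ or $v=\tW^-$ up to the invariances of \eqref{NLS}. The possibility $v=\tW^-$ is excluded since $\|\tW^-\|_{\tS(-\infty,0)}<\infty$; therefore $v$ equals $W$ modulo the phase $\theta_0$ and a scaling, which yields the convergence in $\hdot$ at $t_n=0$.

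The main obstacle, as in the wave case, is the one-profile reduction: ensuring that the limiting data $V$ saturates both the asymptotic energy and gradient bounds while the $\hdot$-remainder has vanishing Strichartz norm. This is where the sub-threshold hypothesis and Theorem \ref{T:classification_NLS} are crucial, since they forbid any secondary profile of sub-critical energy and gradient norm from generating a nonscattering nonlinear object that would spoil the Strichartz bookkeeping. Once this reduction is in place, the rest is a direct transcription of the wave argument, with phase rotations playing the role of the signs $\delta_0,\delta_1$.
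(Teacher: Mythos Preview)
Your proposal is correct and follows essentially the same approach as the paper, which simply states that the proof of Proposition \ref{P:toW} adapts directly to the NLS setting via the profile decomposition of \cite[Section 4]{KeMe06} and Theorem \ref{T:classification_NLS}. One small remark: phases $\theta_n^{(j)}$ are not parameters in the standard radial $\hdot$-profile decomposition (profiles are complex-valued and absorb constant phases); the phase $\theta_0$ in the conclusion arises only at the end, when Theorem \ref{T:classification_NLS} identifies $v$ with $W$ up to the invariances of \eqref{NLS}.
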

We next recall some spectral properties of the operator $\LLL$. We refer to \cite[\S 5.1]{DuMe07a} for the details. We will often identify a complex-valued function $f$ with an $\RR^2$-valued function $(f_1,f_2)^T$, with $f_1=\re f$, $f_2=\im f$. Developping the energy around $W$, we get, for small functions $h\in \hdot$, 
$$ \EEE(W+h)=\EEE(W)+\tQ(h)+O\Big(\|h\|^3_{\frac{2N}{N-2}}\Big),$$
where $\tQ$ is the quadratic form $\tQ(h)=\BB(h,h)$ and $\BB$ is defined by
$$\BB(g,h)=\frac{1}{2}\int \nabla g_1\cdot\nabla h_1-\frac{N+2}{2(N-2)}\int g_1 h_1 W^{\frac{4}{N-2}}+\frac{1}{2}\int \nabla g_2\cdot\nabla h_2-\frac{1}{2}\int g_2 h_2 W^{\frac{4}{N-2}}.$$
Denote by $\YYY_+$ the eigenfunction of $\LLL$ for the eigenvalue $\tomega$ and $\YYY_-=m\overline{\YYY}_+$ the eigenfunction of $\LLL$ for the eigenvalue $-\tomega$ ($m\neq 0$ is a real normalization constant), and recall the definition of $W_0$ in \eqref{def_Wj}. One may show that $W_0$ and $iW$ are in the kernel of $\tQ$. Furthermore, $\tQ(\YYY_+)=\tQ(\YYY_-)=0$ and we may chose $m$ such that $\BB(\YYY_+,\YYY_-)=-1$. Let
$$ \tG_{\bot}:=\left\{h\in \hdot\;:\;\int \nabla W\cdot\nabla h_2=\int \nabla W_0\cdot \nabla h_1=\BB(\YYY_+,h)=\BB(\YYY_-,h)=0\right\}.$$
By \cite[Lemma 5.2]{DuMe07a}, there exists a constant $c>0$ such that
$$ \forall h\in \tG_{\bot},\quad Q(h)\geq c\|\nabla h\|_2^2.$$
We consider as in \S \ref{SS:bootstrap} a sequence $u_n$ of radial solutions of \eqref{NLS} such that
\begin{gather}
\label{subcritical_NLS}
\EEE(u_n)\leq \EEE(W)-\eps_n^2,\quad \|\nabla u_n(0)\|_2<\|\nabla W\|_2,\\
\label{limite_W_NLS}
\lim_{n\rightarrow +\infty} \|\nabla u_n(0)-\nabla W\|_2=0,
\end{gather} 
and develop $h_n=u_n-W$ as follows
\begin{equation}
\label{decomposition_NLS}
h_n(t)=\beta_n^+(t)\YYY_++\beta_n^-(t)\YYY_-+\gamma_n(t)W_0+\delta_n(t)iW+g_n(t),\quad g_n(t)\in \tG_{\bot}.
\end{equation} 
Arguing as in Claim \ref{C:orthogonality}, we may assume 
\begin{equation}
\label{orthogonalityNLS}
\gamma_n(0)=\delta_n(0)=0
\end{equation} 
Then we have the following analog of Propositions \ref{P:exittime} and Claim \ref{C:bound.eps}. We skip the proofs, that are very similar to the previous ones. 
\begin{prop}
\label{P:exittimeNLS}
There exist a constant $\eta_0$, such that for all $\eta\in (0,\eta_0)$, for all sequence $(u_n)$ satisfying \eqref{subcritical_NLS}, \eqref{limite_W_NLS} and \eqref{orthogonalityNLS} if
\begin{align*}
T_n^+(\eta)&=\inf\big\{t\geq 0 \;:\;\,|\beta_n^-(t)|\geq \eta\big\}\\
T_n^-(\eta)&=-\sup\big\{t\leq 0 \;:\;\,|\beta_n^+(t)|\geq \eta\big\}.
\end{align*}
then for large $n$, $T_n^+(\eta)$ and $T_n^-(\eta)$ are finite and
\begin{equation}
\label{estimate_timeNLS}
\lim_{n\rightarrow +\infty} \frac{T_n^+(\eta)}{\log |\beta_n^-(0)|}=\lim_{n\rightarrow +\infty} \frac{T_n^-(\eta)}{\log |\beta_n^+(0)|}=\frac{1}{\tomega}.
\end{equation}
Furthermore,
\begin{equation}
\label{estimate_derivative_NLS}
\liminf_{n\rightarrow +\infty} |\beta_n'(T_n^+(\eta))|\geq \frac{\eta\tomega}{2},\quad \liminf_{n\rightarrow +\infty} |\beta_n'(T_n^-(\eta))|\geq \frac{\eta\tomega}{2}.
\end{equation} 
\end{prop}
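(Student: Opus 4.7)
The plan is to adapt the proof of Proposition \ref{P:exittime} to the NLS setting, using the decomposition \eqref{decomposition_NLS} and the fact that $\LLL$ admits two real eigenvalues $\pm\tomega$ rather than the single pair arising in the wave case. Since $\LLL\YYY_\pm = \pm\tomega\YYY_\pm$, projecting $\partial_t h_n + \LLL h_n + R(h_n) = 0$ onto $\YYY_\pm$ via the bilinear form $\BB$ (normalized by $\BB(\YYY_+,\YYY_-) = -1$) decouples the linear dynamics: modulo quadratic remainders controlled by $\|\nabla h_n\|_2^2$, one has $(\beta_n^+)' = -\tomega\beta_n^+$ and $(\beta_n^-)' = \tomega\beta_n^-$. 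Thus $\beta_n^-$ is the forward-unstable mode (governing $T_n^+$) and $\beta_n^+$ the backward-unstable one (governing $T_n^-$), and the two statements are exchanged by time reversal; I focus on $T_n^+$.

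First I would prove the NLS analog of Claim \ref{C:bound.eps}: there exists $M_0 > 0$ such that for $n$ large,
\begin{equation*}
\eps_n^2 + \|\nabla g_n(0)\|_2^2 \leq M_0\,|\beta_n^+(0)\,\beta_n^-(0)|.
\end{equation*}
The ingredients are the energy expansion $\EEE(W + h_n) = \EEE(W) + \tQ(h_n) + O(\|\nabla h_n\|_2^3)$, the identity $\tQ(\beta_n^+\YYY_+ + \beta_n^-\YYY_-) = -2\beta_n^+\beta_n^-$ (from $\tQ(\YYY_\pm) = 0$ and $\BB(\YYY_+,\YYY_-) = -1$), the vanishing of $\tQ$ on $W_0$ and $iW$, the $\BB$-orthogonality built into $\tG_\bot$, and the coercivity $\tQ(g_n) \geq c\|\nabla g_n\|_2^2$. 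Combined with $\|\nabla h_n\|_2 \lesssim |\beta_n^+| + |\beta_n^-| + \|\nabla g_n\|_2$ (using $\gamma_n(0) = \delta_n(0) = 0$), this forces $\beta_n^\pm(0) \neq 0$ for large $n$; and $|\beta_n^\pm(0)| \to 0$ by \eqref{limite_W_NLS}.

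Next I would prove an NLS bootstrap lemma mirroring Lemma \ref{L:bootstrap*}. Fix $\omega^\pm$ close to $\tomega$ with $\omega^- < \tomega < \omega^+$, choose $M > M_0$, and let $t_n$ be the first time $\|\nabla h_n(t)\|_2 \geq M(|\beta_n^+(t)| + |\beta_n^-(t)|)$ or $\max(|\beta_n^+(t)|,|\beta_n^-(t)|) \geq \eta$. Controlling $R(h_n)$ by $\|\nabla h_n\|_2^2$ on $[0,t_n)$ yields, for $\eta$ small,
\begin{equation*}
|(\beta_n^+)' + \tomega\beta_n^+| + |(\beta_n^-)' - \tomega\beta_n^-| \leq m\,(|\beta_n^+| + |\beta_n^-|),
\end{equation*}
with $m$ arbitrarily small. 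Standard ODE comparison then produces
$\tfrac{1}{K_0}|\beta_n^-(0)|e^{\omega^- t} \leq |\beta_n^-(t)| \leq K_0|\beta_n^-(0)|e^{\omega^+ t}$
on $[0,t_n)$, while $|\beta_n^+(t)|$ remains much smaller than $\eta$ throughout. The analog of \eqref{estimate_energy}--\eqref{estimate_compactness}, together with linear bounds on $\gamma_n', \delta_n'$ integrated from the orthogonality at $t=0$, gives $\|\nabla h_n(t)\|_2 \leq K_0(|\beta_n^+(t)| + |\beta_n^-(t)|)$ and closes the bootstrap. As in the passage from Lemma \ref{L:bootstrap*} to Proposition \ref{P:exittime}, for $M$ large one has $t_n = T_n^+(\eta)$, and sending $\omega^\pm \to \tomega$ yields \eqref{estimate_timeNLS} and \eqref{estimate_derivative_NLS} for $T_n^+$. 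The $T_n^-$ case is obtained by time reversal.

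The main obstacle is the indefiniteness of $\tQ$ on $\vect(\YYY_+,\YYY_-)$: since $\tQ(\YYY_\pm) = 0$, the energy deficit controls only the product $|\beta_n^+(0)\,\beta_n^-(0)|$ and not each factor individually, so one cannot directly compare $\eps_n$ with $|\beta_n^\mp(0)|$ as in the wave case. Fortunately \eqref{estimate_timeNLS} is phrased intrinsically in terms of the unstable coefficient, whose linear growth at rate $\tomega$ sets the exit time; the variational bound is needed only to rule out $\beta_n^\pm(0) = 0$. A secondary concern is that coupling through the quadratic remainder could in principle let the forward-stable mode $\beta_n^+$ contaminate the exponential growth of $\beta_n^-$, but this is tamed by the smallness of $\eta$ via the bootstrap on $\|\nabla h_n\|_2$.
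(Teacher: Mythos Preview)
Your proposal is correct and follows essentially the same approach the paper intends: the paper explicitly skips the proof of Proposition~\ref{P:exittimeNLS}, stating only that it is ``very similar'' to that of Proposition~\ref{P:exittime} and Claim~\ref{C:bound.eps}, and your adaptation is the natural one. You have correctly identified the structural changes---the first-order linearized flow with two real eigenvalues $\pm\tomega$ replacing the second-order equation, the product $|\beta_n^+\beta_n^-|$ appearing in the energy expansion in place of $\beta_n^2$, and the resulting AM--GM control of $\|\nabla g_n\|_2$---and your bootstrap scheme mirrors Lemma~\ref{L:bootstrap*} appropriately.
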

Observe that in contrast with the wave equation case, there are two eigenfunctions, and that we have distinguished between the coefficient $\beta_n^-$ of $\YYY_-$, which tends to grow for positive times, and the one of $\YYY_+$, which plays a similar role for negative times.
\begin{claim}
\label{C:bound.epsNLS}
There exists $M_0>0$ such that for all sequence $(u_n)$ of solutions of \eqref{NLS} satisfying \eqref{subcritical_NLS}, \eqref{limite_W_NLS} and \eqref{orthogonalityNLS} we have
$$\beta_n^+\beta_n^-(0)\neq 0\text{ and }\limsup_{n\rightarrow +\infty} \frac{\eps_n^2}{\left|\beta_n^+(0)\beta_n^-(0)\right|}\leq M_0.
$$
\end{claim}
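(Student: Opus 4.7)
The plan is to repeat the proof of Claim \ref{C:bound.eps} almost verbatim, with the quadratic form $\tQ$ substituted for $Q$ and the bilinear pairing $\BB(\YYY_+,\YYY_-)=-1$ replacing the definite-sign quantity $Q(\YYY)<0$. The main difference will therefore be bookkeeping of the bilinear cross-term that replaces the single squared coefficient $\beta_n^2(0)$ of the wave case.

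First I would expand the conserved NLS energy around $W$: using $\EEE(W+h_n(0))=\EEE(W)+\tQ(h_n(0))+O(\|\nabla h_n(0)\|_2^3)$ together with \eqref{subcritical_NLS}, one gets
\[
\tQ(h_n(0))\leq -\eps_n^2+O(\|\nabla h_n(0)\|_2^3).
\]
Next I would plug the decomposition \eqref{decomposition_NLS} into $\BB(h_n(0),h_n(0))$ and cancel cross terms using: (a) $\tQ(\YYY_\pm)=0$ and $\BB(\YYY_+,\YYY_-)=-1$; (b) the fact that $W_0$ and $iW$ lie in the kernel of $\tQ$, hence $\BB(W_0,\cdot)=\BB(iW,\cdot)=0$; and (c) the orthogonalities $\BB(\YYY_\pm,g_n(0))=0$ built into the definition of $\tG_\bot$. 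The only surviving cross term is $2\beta_n^+(0)\beta_n^-(0)\BB(\YYY_+,\YYY_-)=-2\beta_n^+(0)\beta_n^-(0)$, giving
\[
\tQ(h_n(0))=-2\beta_n^+(0)\beta_n^-(0)+\tQ(g_n(0)).
\]
Combining this identity with the coercivity $\tQ(g_n(0))\geq c\|\nabla g_n(0)\|_2^2$ from \cite[Lemma~5.2]{DuMe07a} and the bound $\|\nabla h_n(0)\|_2^2\leq C(|\beta_n^+(0)|^2+|\beta_n^-(0)|^2+\|\nabla g_n(0)\|_2^2)$, a consequence of \eqref{orthogonalityNLS} and of the linear independence of the modes in \eqref{decomposition_NLS}, I would absorb the cubic error using the smallness of $\|\nabla h_n(0)\|_2$ from \eqref{limite_W_NLS} to arrive at an inequality of the form
\[
2\beta_n^+(0)\beta_n^-(0)+o(1)\bigl(|\beta_n^+(0)|^2+|\beta_n^-(0)|^2\bigr)\geq \eps_n^2.
\]

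I expect the main obstacle to be extracting from the latter inequality a quantitative lower bound on the \emph{product} $|\beta_n^+(0)\beta_n^-(0)|$, rather than merely on the sum $|\beta_n^+(0)|^2+|\beta_n^-(0)|^2$: the wave-case argument exploits that $\beta_n^2(0)\geq 0$ automatically, which has no analogue here since the product $\beta_n^+(0)\beta_n^-(0)$ carries no a priori sign and the quadratic form $2AB+\delta(A^2+B^2)$ with small $\delta$ is indefinite. To settle this I would prove separately that $\beta_n^+(0)\beta_n^-(0)>0$ for large $n$, invoking the remaining part of \eqref{subcritical_NLS}, namely $\|\nabla u_n(0)\|_2<\|\nabla W\|_2$: written as $2\int\nabla W\cdot\nabla\re h_n(0)+\|\nabla h_n(0)\|_2^2<0$ and expanded using $\YYY_-=m\overline{\YYY_+}$ in \eqref{decomposition_NLS}, this produces a linear constraint on $\beta_n^+(0)+m\beta_n^-(0)$ that, combined with the quadratic inequality above and \eqref{limite_W_NLS}, rules out the indefinite directions, forces $\beta_n^+(0)\beta_n^-(0)>0$ for large $n$, and yields $|\beta_n^+(0)\beta_n^-(0)|\geq \eps_n^2/M_0$ for a suitable constant $M_0$.
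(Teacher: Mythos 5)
The paper itself omits the proof of this claim (it only says the argument is ``very similar'' to that of Claim \ref{C:bound.eps}), so I am comparing your attempt against the wave-equation template. Your first half follows that template correctly: the expansion of $\EEE$, the identity $\tQ(h_n(0))=-2\beta_n^+(0)\beta_n^-(0)+\tQ(g_n(0))$ (from $\tQ(\YYY_\pm)=0$, $\BB(\YYY_+,\YYY_-)=-1$, $\BB(\YYY_\pm,g_n(0))=0$ and \eqref{orthogonalityNLS}), and the coercivity of $\tQ$ on $\tG_{\bot}$ are the right ingredients, and you correctly observe that the only structural change is that the negative part of $\tQ$ is now the hyperbolic form $-2\beta^+\beta^-$ instead of the definite form $-\beta^2|Q(\YYY)|$. (A small logical slip: $W_0$ and $iW$ being in the kernel of the \emph{quadratic form} $\tQ$ does not imply $\BB(W_0,\cdot)=\BB(iW,\cdot)=0$ --- the isotropic vectors $\YYY_\pm$ show these are different notions; the stronger statement does hold for $W_0$ and $iW$ because they are annihilated by the linearized operator, and in any case their coefficients vanish at $t=0$.)

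The genuine gap is in your last paragraph. You rightly isolate the difficulty --- the cubic remainder produces a term $o(1)\bigl(|\beta_n^+(0)|^2+|\beta_n^-(0)|^2\bigr)$ that is not dominated by $|\beta_n^+(0)\beta_n^-(0)|$ when the two coefficients have very different sizes --- but the proposed repair does not close it. The constraint $\|\nabla u_n(0)\|_2<\|\nabla W\|_2$ reads $2\int W^{\frac{N+2}{N-2}}\re h_n(0)+\|\nabla h_n(0)\|_2^2<0$, and since $\tG_{\bot}$ imposes no orthogonality of $\re g_n$ against $W$ itself, the contribution $\int W^{\frac{N+2}{N-2}}\re g_n(0)$ is only $O(\|\nabla g_n(0)\|_2)$, i.e.\ first order. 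What you actually obtain is a one-sided inequality $\bigl(\beta_n^+(0)+m\beta_n^-(0)\bigr)\kappa\leq C\|\nabla g_n(0)\|_2$ with $\kappa=\re\int\nabla W\cdot\nabla\YYY_+$: a bound on a single linear combination, which neither fixes the sign of $\beta_n^+(0)\beta_n^-(0)$ nor controls the ratio $|\beta_n^+(0)|/|\beta_n^-(0)|$. Concretely, the lopsided regime $|\beta_n^-(0)|\ll|\beta_n^+(0)|=\delta_n$, $\|\nabla g_n(0)\|_2\ll\delta_n$, $\beta_n^+(0)<0$, in which the energy defect is carried entirely by the cubic remainder (so that $\eps_n^2$ can be of order $\delta_n^3\gg|\beta_n^+(0)\beta_n^-(0)|$), is compatible with every inequality you derive, and then $\eps_n^2/|\beta_n^+(0)\beta_n^-(0)|\to\infty$. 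Even the assertion $\beta_n^+(0)\beta_n^-(0)\neq 0$ is not reached. Ruling out this regime seems to require input beyond the $t=0$ variational identities --- for instance propagating the decomposition in time: $\beta_n^+\beta_n^-$ is almost conserved by the flow while $|\beta_n^+(t)|$ and $|\beta_n^-(t)|$ can be equalized at their geometric mean, at which moment the cubic error is $O\bigl(|\beta_n^+\beta_n^-|^{3/2}\bigr)$ and can be absorbed as in the wave case. Nothing of this sort appears in your argument, so the quantitative conclusion of the claim is not established.
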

\subsection{Sketch of the proof of Theorem \ref{theoNLS}.}
\label{SS:sketchNLS}
\quad

\EMPH{Step 1. Lower bound}

We first show
\begin{equation}
\label{lower_boundNLS}
\liminf_{\eps\rightarrow 0^+} \frac{\tIII_{\eps}}{\left|\log \eps\right|}\geq \frac{2}{\tomega}\int_{\RR^N} W^{\frac{2(N+2)}{N-2}}.
\end{equation} 
Multiplying $\YYY_+$ and $\YYY_-$ by $-1$ if necessary, we may assume $\re\int \nabla W\cdot \nabla \YYY_{\pm}>0$. Consider the family of solutions $(u^a)_{a>0}$ of \eqref{NLS} with initial conditions
$$ u^a_0=W-a \YYY_+-a\YYY_-.$$
Then for small $a>0$, $\int |\nabla u^a_0|^2<\int |\nabla W|^2.$ Furthermore
\begin{equation}
\label{energy_uaNLS}
\EEE\left(u^a_0\right)=\EEE(W)-2a^2+O\left(a^3\right).
\end{equation}
We argue by contradiction. If \eqref{lower_boundNLS} does not hold, there exists a sequence $\eps_n$ which tends to $0$ such that for some $\rho>\tomega$
\begin{equation}
\label{absurd_lower_boundNLS}
\forall n,\quad \frac{2}{\rho}\int_{\RR^N} W^{\frac{2(N+2)}{N-2}}\geq \frac{\tIII_{\eps_n}}{\left|\log \eps_n\right|}.
\end{equation} 
We then chose a sequence $a_n$ such that 
\begin{equation}
\label{epsn_NLS}
\eps_n^2=\EEE(W)-\EEE\left(u^{a_n}_0\right),\quad \eps_n^2 \sim 2a_n^2  \text{ as } n\rightarrow +\infty.
\end{equation} 
Let $u_n=u^{a_n}$. Then the assumptions of Proposition \ref{P:exittimeNLS} are satisfied. Consider a small $\eta>0$. By Proposition \ref{P:exittimeNLS}, noting that $\beta_n^+(0)=\beta_n^-(0)=-a_n$, we get
\begin{equation*}
\lim_{n\rightarrow +\infty} \frac{T_n^+(\eta)}{\left|\log a_n\right|}=\lim_{n\rightarrow +\infty} \frac{T_n^-(\eta)}{\big|\log a_n\big|}=\frac{1}{\tomega} 
\end{equation*}
By \eqref{epsn_NLS},
\begin{equation}
\label{estimate_TnNLS}
\lim_{n\rightarrow +\infty} \frac{T_n^-(\eta)}{\big|\log |\eps_n|\big|}=\lim_{n\rightarrow +\infty} \frac{T_n^+(\eta)}{\big|\log |\eps_n|\big|}=\frac{1}{\tomega}.
\end{equation}
Writing $u_n=W+h_n$, and arguing as in Step 1 of Section \ref{S:proof}, we obtain
$$ \int_0^{T_n^+(\eta)} \int_{\RR^N}|u_n|^{\frac{2(N+2)}{N-2}} \geq T_n^+(\eta)\left[\|W\|_{\frac{2(N+2)}{N-2}}-C\eta\right]^{\frac{2(N+2)}{N-2}}.$$
Hence with \eqref{estimate_TnNLS}, and letting $\eta$ tends to $0$,
$$ \liminf_{n\rightarrow+\infty} \frac{1}{|\log \eps_n |} \int_0^{+\infty} |u_n|^{\frac{2(N+2)}{N-2}}\geq \frac{1}{\tomega}\|W\|_{\frac{2(N+2)}{N-2}}^{\frac{2(N+2)}{N-2}}.$$
Arguing similarly for negative time, we obtain
$$ \liminf_{n\rightarrow+\infty} \frac{\tIII_{\eps_n}}{|\log \eps_n |} \geq \liminf_{n\rightarrow+\infty} \frac{1}{|\log \eps_n |} \int_{-\infty}^{+\infty} |u_n|^{\frac{2(N+2)}{N-2}}\geq \frac{2}{\tomega}\|W\|_{\frac{2(N+2)}{N-2}}^{\frac{2(N+2)}{N-2}},$$
contradicting \eqref{absurd_lower_boundNLS}. Step 1 is complete.

\EMPH{Step 2. Upper bound}

To show the upper bound on $\tIII_{\eps}$,  we must show that for any sequence $\eps_n>0$ that goes to $0$ any sequence $u_n$ of solutions of \eqref{NLS} such that
\begin{equation}
\label{hyp.un2NLS}
\|\nabla u_n(0)\|_2<\|\nabla W\|_2,\quad \EEE(W)-\EEE(u_n)=\eps_n^2,
\end{equation}
we have
\begin{equation}
\label{CVNLS}
\limsup_{n\rightarrow +\infty} \frac{1}{\left|\log \eps_n\right|}\int_{\RR\times\RR^N} |u_n|^{\frac{2(N+2)}{N-2}}\leq \frac{2}{\tomega}\int_{\RR^N} W^{\frac{2(N+2)}{N-2}}.
\end{equation}
In view of Proposition \ref{P:toWNLS} and the analoguous of Claim \ref{C:orthogonality}, we may assume that $u_n$ satisfy the assumptions of Proposition \ref{P:exittimeNLS}.

Fix a small $\eta>0$, and consider $T_n^{\pm}(\eta)$ defined by Proposition \ref{P:exittimeNLS}. Then by the same proof than in Step 2 of Section \ref{S:proof}, one can show that there exists a constant $C>0$ such that
\begin{align*}
\limsup_{n\rightarrow +\infty} \frac{1}{\left|\log |\beta_n^-(0)|\right|}\int_0^{T_n^+(\eta)} \int_{\RR^N} |u_n|^{\frac{2(N+2)}{N-2}}\leq \frac{1}{\tomega}\left[\|W\|_{\frac{2(N+2)}{N-2}}+C\eta\right]^{\frac{2(N+2)}{N-2}}\\
\limsup_{n\rightarrow +\infty} \frac{1}{\left|\log |\beta_n^+(0)|\right|}\int_{T_n^-(\eta)}^{0} \int_{\RR^N} |u_n|^{\frac{2(N+2)}{N-2}}\leq \frac{1}{\tomega}\left[\|W\|_{\frac{2(N+2)}{N-2}}+C\eta\right]^{\frac{2(N+2)}{N-2}}.
\end{align*} 
By Claim \ref{C:bound.epsNLS}, for large $n$,
\begin{equation*} 
2\left|\log \eps_n\right|\geq \Big|\log|\beta_n^-(0)|+\log|\beta_n^+(0)|\Big|+o_n(1),
\end{equation*} 
which yields 
\begin{equation}
\label{small_times_NLS}
\limsup_{n\rightarrow +\infty} \int_{-T_n^-(\eta)}^{T_n^+(\eta)} \int_{\RR^N} |u_n|^{\frac{2(N+2)}{N-2}}\leq \frac{2\left|\log \eps_n\right|}{\tomega}\left[\|W\|_{\frac{2(N+2)}{N-2}}+C\eta\right]^{\frac{2(N+2)}{N-2}}\\
\end{equation} 
It remains to show, as in Step 3 of Section \ref{S:proof}, that  if $\eta$ is small enough, there exists a constant $C(\eta)>0$ such that for large $n$
\begin{equation}
\label{bound2_NLS} 
\|u_n\|_{S(-\infty,-T_n^-(\eta))}+\|u_n\|_{S(T_n^+(\eta),+\infty)}\leq C(\eta). 
\end{equation} 
Combining \eqref{small_times_NLS} and \eqref{bound2_NLS} and letting $\eta$ tends to $0$ we would get \eqref{CVNLS}.

To show \eqref{bound2_NLS}, we argue by contradiction. Assume that there exists a subsequence of $(u_n)$, such that (from now on, we will write $T_n^+=T_n^+(\eta)$)
\begin{equation*}
\|u_n\|_{S(T_{n}^+,+\infty)} \underset{n\rightarrow\infty}{\longrightarrow} +\infty.
\end{equation*}
Then by Proposition \ref{P:toWNLS}, there exists $\theta_0\in \RR$ and a sequence $\lambda_n>0$, such that
\begin{equation}
\label{uepstoWbisNLS} 
\lim_{n\rightarrow +\infty} \left\|\frac{e^{i\theta_0}}{\lambda_n^{N/2}}\nabla u_n\left(T_{n}^+,\frac{\cdot}{\lambda_n}\right)-\nabla W\right\|_{2}=0.
\end{equation}
As in Step 3 of Section \ref{S:proof}, we will get a contradiction by showing that $\frac{d\beta_n^-}{dt}(T_n^+(\eta))$ tends to $0$. Unlike the case of the wave equation, the convergence to $0$ of the time derivative of $u$ is not given directly by the compactness argument of Proposition \ref{P:toWNLS}. However, \eqref{uepstoWbisNLS} and the fact that $u_n$ is a solution of \eqref{NLS} which is in $C^{0}(\RR,\hdot)$ shows that
$$ \lim_{n\rightarrow+\infty} \left\|\partial_t u_n(T_n^+)\right\|_{H^{-1}}=0.$$
As $\YYY_+$ is a Schwartz function (see \cite[\S 7.2.2]{DuMe07a}), we get, at the point $t=T_n^+$,
\begin{equation}
\label{BunY0}
\lim_{n\rightarrow+\infty}\frac{d}{dt}B(u_n(t),\YYY_+))=0.
\end{equation}
The condition $g_n(t)\in G_{\bot}$ implies that $\beta_n^-(t)=-B\big(u_n(t)-W,\YYY_+\big)$. Thus \eqref{estimate_derivative_NLS} contradicts \eqref{BunY0}. This shows
\begin{equation*}
\|u_n\|_{S(T_n^+(\eta),+\infty)}\leq C(\eta). 
\end{equation*} 
By a similar argument for negative time, we get \eqref{bound2_NLS}. Combining \eqref{small_times_NLS} and \eqref{bound2_NLS}, we obtain \eqref{CVNLS}, which concludes the sketch of the proof of Theorem \ref{theoNLS}.

\bibliographystyle{alpha} 
\bibliography{toto}

\end{document}